\newtheorem{teorema}{Theorem}
\newtheorem{definicion}{Definition}
\newtheorem{lema}{Lemma}
\theoremstyle{remark}
\newtheorem{ejemplo}{Example}
\newtheorem{observacion}{Observation}
\newcommand{\denselist}{\topsep 0pt\itemsep 0pt}
\newcommand{\set}[1]{\left\{ #1 \right\}}
\newcommand{\setdef}[2]{\set{ #1 \, : \, #2}}
\title{Perfectly nested circuits}
\author[1]{Mar\'ia Carrasco ($\dagger$)}
\author[2]{Zenaida Castillo}
\author[3]{Nerio Borges}
\affil[1]{Universidad Central de Venezuela}
\affil[2,3,4]{Yachay Tech}
\author[4]{Ram\'on Pino P\'erez}
\date{\today}
\begin{document}

\maketitle

\begin{abstract}
Nested graphs have been used in different applications, for example to represent knowledge in semantic networks. On the other hand, graphs with cycles are really important in surface reconstruction, periodic schedule  and network analysis. Also, of particular interest are the cycle basis, which arise in mathematical and algorithm problems. In this work we develop the concept of perfectly nested circuits, exploring some of their properties. The main result establishes an order isomorphism between some sets of perfectly nested circuits and equivalence classes over finite binary sequences.
\end{abstract}

\section{Introduction}

For decades several authors have used conceptual structures in research related with logic, linguistics and artificial intelligence. In \cite{Sowa76} Sowa presented the conceptual graph structure as a way to interpret questions and assertions in natural language to make inference in relational databases. The concept was developed in detail in \cite{Sowa84}.\\
Nested graphs were introduced and used for representing knowledge, see for example \cite{Leh92, Che_Mug92, Helbig06}. Given the notion of simple conceptual graphs the authors in \cite{Chein98} present the nested conceptual graphs to represent complex information. They show how nested graphs can be more convenient to represent  knowledge.\\
This article studies the characteristic of a similar structure that we have called Perfectly Nested Circuit (PNC). A mathematical framework is presented to analyze the properties of this kind of graphs. An interesting consequence of these properties is the existence of an order isomorphism between PNCs and equivalence classes of finite binary sequences. \\
This paper is organized as follows:
In section \ref{sec:Reducciones} we define reductions and the order induced by them, in a setting slightly more general than PNCs.

Section \ref{sec:CPA} defines perfectly nested circuits and studies the properties required to follow the results in Section \ref{sec:SucesionesBinarias&CPA}, where we state and prove the main result.

Finally in Section \ref{sec:Conclusiones} we summarize the obtained results and discuss some possible extensions to this work.


\section{Reductions on circuits}\label{sec:Reducciones}

 In this section we define two kind of operations on graphs\footnote{Actually, simple graphs are the only graphs considered in this work.}
 that will be central to our work.
We mostly follow the notation and terminology on graph theory from \cite{Diestel}.
We also use the following notation regarding binary relations \cite{book:Baader-Nipkow-rewriting}:
\begin{description}
	\item{$\stackrel{0}{\rightarrow}$} denotes the identity relation.
	\item{$\stackrel{i+1}{\rightarrow}$} is the $(i+1)$-fold composition of $\rightarrow$ with itself.
	\item{$\stackrel{+}{\rightarrow}$} is the {\em transitive closure} of $\rightarrow$ i.e.
	the smallest transitive relation containing $\rightarrow$.
	\item{$\stackrel{*}{\rightarrow}$} is the smallest reflexive and transitive relation containing $\rightarrow$,
	we call it the {\em transitive, reflexive, closure} of $\rightarrow$. Notice that
	$
	\stackrel{*}{\rightarrow}\,=\,\stackrel{+}{\rightarrow}\cup \stackrel{0}{\rightarrow}
	$.
\end{description}
The binary relation $\rightarrow$  is {\em terminating} if there is no infinite sequence $a_0,a_1,\dots$ such that $a_0\rightarrow a_1\rightarrow a_2\rightarrow \dots$

\begin{definicion}
	Let $\gamma=v_0v_1\dots v_n$ be a circuit.
	If there are $i\neq j$ such that $v_i=v_j$ we say that
	$\gamma'=v_iv_{i+1}\dots v_j$ is a {\em sub-circuit} of $\gamma$.
	If $(i,j)\neq (0,n)$ then $\gamma'$ is a {\em proper} sub-circuit.
	The sub-circuit $\gamma'$ is denoted by $[i,j]$.
\end{definicion}

{
	\begin{definicion}
		Let $\gamma=v_0v_1\dots v_n$ be a circuit with a proper sub-circuit $v_i\dots v_j$.
		The {\em internal reduction} of $\gamma$ at $i$ is the closed walk
		\begin{enumerate}
			\item $v_0\dots v_iv_{j+1}\dots v_n$ for $i\neq 0$,
			\item $v_j\dots v_n$ for $i=0$.
		\end{enumerate}	
		
		The {\em external reduction} of $\gamma$ at $i$ is the closed walk $v_iv_{i+1}\dots v_j$.
	\end{definicion}
}

\begin{observacion}\label{obs:CompactionsAndSetDifferences}
	Notice that if $[i,j]$ is a proper sub-circuit of $\gamma$:
	\begin{enumerate}
		\item The internal reduction of $\gamma$ at $i$ is $\gamma-\set{v_{i+1},\dots,v_{j-1}}$.
		\item The external reduction of $\gamma$ at $i$ is $\gamma-\set{v_0,\dots v_{i-1},v_{j+1},\dots, v_{n-1}}$
		if $i\neq 0$ and $\gamma-\set{v_{j+1},\dots, v_{n-1}}$ if $i=0$.
	\end{enumerate}
	In all the cases a reduction of $\gamma$ has less vertices than $\gamma$.
\end{observacion}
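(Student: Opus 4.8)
The plan is to unfold the two definitions and compare them, term by term, with the vertex-deletion operation, invoking the defining equality $v_i=v_j$ of a sub-circuit at the single place where it is genuinely needed.

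I would treat the internal reduction first. Write $\gamma=v_0\cdots v_i\,v_{i+1}\cdots v_{j-1}\,v_j\,v_{j+1}\cdots v_n$ and delete from $\gamma$ the vertices $v_{i+1},\dots,v_{j-1}$. What remains is the sequence $v_0\cdots v_i\,v_j\,v_{j+1}\cdots v_n$, and since $v_i=v_j$ the two surviving consecutive vertices $v_i,v_j$ coincide and collapse into one, leaving exactly the closed walk $v_0\cdots v_i v_{j+1}\cdots v_n$ when $i\neq 0$ and $v_j\cdots v_n$ when $i=0$ (in the latter case it closes up because $v_i=v_j=v_0$). The one thing to check is that this is still a walk of $\gamma$: the sole ``new'' consecutive pair $v_iv_{j+1}$ equals $v_jv_{j+1}$, which is an edge of $\gamma$, while every other consecutive pair already occurred in $\gamma$. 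This is item (1).

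For the external reduction the argument is even shorter. By definition it is the closed walk $v_iv_{i+1}\cdots v_j$; each of its edges already appears in $\gamma$, and it closes up precisely because $v_j=v_i$, so no ``new'' edge enters at all. Its set of vertices is $\{v_i,\dots,v_j\}$, i.e. every vertex of $\gamma$ with the exception of $v_0,\dots,v_{i-1}$ on the left and $v_{j+1},\dots,v_{n-1}$ on the right; separating off the degenerate case $i=0$ (nothing removed on the left) and recalling that when $j=n$ the vertex $v_n=v_0$ is not listed twice, this is item (2). Both computations are pure index bookkeeping; the only discipline required is to keep the boundary cases $i=0$ and $j=n$ apart, since there the endpoints of the index ranges collide.

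For the final assertion, the graphs considered are simple and hence loopless, so $v_iv_{i+1}\in E(\gamma)$ forces $v_{i+1}\neq v_i=v_j$, whence $j\ge i+2$ and the set $\{v_{i+1},\dots,v_{j-1}\}$ deleted in the internal reduction is non-empty; and because $(i,j)\neq(0,n)$, at least one of the ranges $\{v_0,\dots,v_{i-1}\}$, $\{v_{j+1},\dots,v_{n-1}\}$ deleted in the external reduction is non-empty too (for $i=0$ one uses again that a loop would otherwise appear at $v_0$). In every case a non-empty set of vertices is removed, so the reduction has strictly fewer vertices than $\gamma$. I do not expect a genuine obstacle here: the statement is essentially bookkeeping, and the only thing to stay honest about is the case analysis around the positions $0$ and $n$.
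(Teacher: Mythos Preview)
The paper states this as an \emph{observation} and offers no proof at all; it is taken as self-evident bookkeeping. Your verification is correct and considerably more careful than anything the paper provides, including the boundary cases $i=0$ and $j=n-1$ and the use of looplessness to force $j\geq i+2$. One minor remark: your check that the surviving consecutive pair $v_iv_{j+1}$ is an edge of $\gamma$ is not strictly part of this observation (which is only about vertex sets); that edge argument is exactly what the paper carries out separately in Lemma~\ref{le:ReduccionesPreservanCircuitos}, so you have in effect folded part of that lemma into your write-up.
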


{
	\begin{lema}\label{le:ReduccionesPreservanCircuitos}
		If $\delta$ is a reduction of a circuit $\gamma$ then it is a circuit.
	\end{lema}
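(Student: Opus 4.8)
The plan is to unwind the definition of reduction into its two cases and, in each, verify the conditions defining a circuit. I would fix $\gamma = v_0 v_1 \dots v_n$ with $v_0 = v_n$ and a proper sub-circuit $[i,j]$ at which we reduce; by the definition of sub-circuit we have $v_i = v_j$, and we may take $i < j$ (as the notation $v_i v_{i+1}\dots v_j$ indicates), with $(i,j)\neq (0,n)$.

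For the external reduction $v_i\dots v_j$, and for the internal reduction in the case $i = 0$, namely $v_j\dots v_n$, I would simply observe that these are contiguous subsequences of the sequence $\gamma$. Hence every consecutive pair occurring in them is a consecutive pair of $\gamma$, so they are walks; their edge sets are contained in the edge set of $\gamma$, and in particular contain no repeated edges. Their endpoints agree: $v_i = v_j$ in the first case, and $v_j = v_i = v_0 = v_n$ in the second. They are non-degenerate since $i < j$ and, in the second case, $j < n$ (because $(0,j)\neq (0,n)$). So each is a closed walk whose edges form a subset of the edges of $\gamma$, which is exactly what is needed to be a circuit.

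The interesting case is the internal reduction at $i\neq 0$, i.e. $\delta = v_0\dots v_i v_{j+1}\dots v_n$. Here I would check adjacency pair by pair: the pairs lying within the block $v_0\dots v_i$ or within the block $v_{j+1}\dots v_n$ are consecutive pairs of $\gamma$ and hence edges, and the only genuinely new pair is $\{v_i, v_{j+1}\}$ at the splice point. Since $v_i = v_j$, this pair equals $\{v_j, v_{j+1}\}$, which is an edge of $\gamma$ (a real edge, not a loop, because $v_j \neq v_{j+1}$ in the simple graph underlying $\gamma$). Thus $\delta$ is a walk; it is closed because its endpoints are $v_0$ and $v_n = v_0$; and it is non-degenerate. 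Moreover its edge set is $\{v_0v_1,\dots,v_{i-1}v_i\}\cup\{v_jv_{j+1}, v_{j+1}v_{j+2},\dots,v_{n-1}v_n\}$, a subset of the edge set of $\gamma$ with no repetitions, since the two blocks use the disjoint edge-index ranges $[1,i]$ and $[j+1,n]$ and the edges of $\gamma$ are distinct. Hence $\delta$ is a circuit.

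The only step with any real content is identifying the splice pair $\{v_i,v_{j+1}\}$ as an edge, which is precisely where the hypothesis $v_i = v_j$ enters; everything else is bookkeeping about subsequences. This bookkeeping can be compressed by appealing to Observation \ref{obs:CompactionsAndSetDifferences}, which already records that a reduction is obtained from $\gamma$ by deleting vertices, so that — once the identification $v_i = v_j$ is taken into account — it inherits its edges from $\gamma$ and introduces none of its own.
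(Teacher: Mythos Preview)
Your proof is correct and follows essentially the same approach as the paper's: split into external and internal reductions, verify in each case that $\delta$ is a closed walk whose edges all lie in $\gamma$, and use that $\gamma$ has no repeated edges to conclude $\delta$ has none; the identification $v_iv_{j+1}=v_jv_{j+1}$ at the splice point is exactly the key step the paper isolates. Your only organizational difference is grouping the external reduction and the $i=0$ internal reduction together as ``contiguous subsequences of $\gamma$'', which is a tidy shortcut but not a different argument.
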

	
	\begin{proof}
		Any external reduction of $\gamma$ is a circuit since it is by definition a sub-circuit.
		
		Now suppose $\gamma=v_0v_1\dots v_n$ and $\delta$ is an internal reduction of $\gamma$ at some $i$.
		There are two possible cases. In each case we want to check two conditions: that $\delta$ is a closed walk and that every edge in $\delta$ is also an edge of $\gamma$.
		Then every repeated edge in $\delta$ is necessarily a repeated edge in $\gamma$, hence if $\gamma$ is a circuit
		$\delta$ is a circuit too.
		
		\begin{enumerate}
			\item If $i\neq 0$ then $\delta$ is closed since $\delta=v_0\dots v_iv_{j+1}\dots v_n$ and $v_0=v_n$ by hypothesis.
			
			On the other hand it is immediate that any edge $v_kv_{k+1}$ with $1\leq k<i$ or $j<k<n$ is an edge in $\gamma$. The edge $v_iv_{j+1}$ is also in $\gamma$ because $v_i=v_j$ thus $v_iv_{j+1}=v_jv_{j+1}$.
			
			\item If $i=0$ then $\delta=v_j\dots v_n$ and it is is closed
			since $v_0=v_j=v_n$. This also implies that the edges $v_nv_j$ and $v_nv_0$ are the same, hence $v_nv_j$ is an edge in $\gamma$. Any other edge in $\delta$ is clearly in $\gamma$.
			
		\end{enumerate}
		
		Therefore any reduction of a circuit, whether internal or external is again a circuit.
		
	\end{proof}
}

\begin{definicion}
	Given two circuits $\gamma,\delta$ the binary relation $\rightarrow_C$ is defined as:
\begin{equation}
	\gamma\rightarrow_C\delta\iff \delta\quad\text{is a reduction of $\gamma$ at some $i$}
\end{equation}
\end{definicion}	

Since $\gamma\rightarrow_C\delta$ implies $\delta=\gamma-U$
for some non empty subset $U$ of vertices of $\gamma$
it is clear that
\[
\gamma\rightarrow_C\delta\implies |\delta|<|\gamma|
\]
Hence the relation $\rightarrow_C$ terminates.

\begin{definicion}
	The {\em family of reductions} of a ciruit $\gamma$ is the set
	\[
	X_\gamma=\setdef{\delta\subseteq\gamma}{\gamma\stackrel{*}{\rightarrow}_C\delta}
	\]
\end{definicion}

\begin{teorema}
	If $\gamma$ is a circuit then each element of $X_\gamma$ is a circuit.
\end{teorema}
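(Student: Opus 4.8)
The plan is to induct on the number of reduction steps needed to reach $\delta$ from $\gamma$. By definition $\stackrel{*}{\rightarrow}_C\ =\ \stackrel{0}{\rightarrow}_C\cup\stackrel{+}{\rightarrow}_C$, so that $\delta\in X_\gamma$ means there is some $i\geq 0$ with $\gamma\stackrel{i}{\rightarrow}_C\delta$; equivalently, there is a finite chain $\gamma=\delta_0\rightarrow_C\delta_1\rightarrow_C\cdots\rightarrow_C\delta_i=\delta$. We prove by induction on $i$ that every such $\delta$ is a circuit.

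For the base case $i=0$ we have $\delta=\gamma$, which is a circuit by hypothesis. For the inductive step, suppose the claim holds for chains of length $i$, and let $\gamma\stackrel{i}{\rightarrow}_C\delta'\rightarrow_C\delta$. By the induction hypothesis $\delta'$ is a circuit. Since $\delta'\rightarrow_C\delta$, by definition $\delta$ is a reduction (internal or external) of $\delta'$ at some index. Applying Lemma~\ref{le:ReduccionesPreservanCircuitos} to the circuit $\delta'$, we conclude that $\delta$ is again a circuit. This completes the induction, and hence every element of $X_\gamma$ is a circuit.

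It is worth noting that the side condition $\delta\subseteq\gamma$ appearing in the definition of $X_\gamma$ plays no active role in this argument: by Observation~\ref{obs:CompactionsAndSetDifferences} each reduction step only deletes vertices, so every $\delta_k$ is a subgraph of $\delta_{k-1}$ and ultimately of $\gamma$, making the condition automatic along any chain. The only substantive input is Lemma~\ref{le:ReduccionesPreservanCircuitos}, invoked once per step.

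I do not expect a genuine obstacle here. The one point requiring a little care is the bookkeeping: unwinding the reflexive–transitive closure $\stackrel{*}{\rightarrow}_C$ into a finite chain so that the induction is well founded. This is immediate from the definition of $\stackrel{*}{\rightarrow}_C$ as the union of the finite-fold compositions $\stackrel{i}{\rightarrow}_C$; alternatively, since $\rightarrow_C$ terminates (as already observed, each step strictly decreases the number of vertices), such chains are automatically of bounded length, though we do not even need this stronger fact.
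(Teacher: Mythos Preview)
Your proof is correct and follows essentially the same approach as the paper: both unwind $\gamma\stackrel{*}{\rightarrow}_C\delta$ into a finite chain $\gamma=\delta_0\rightarrow_C\cdots\rightarrow_C\delta_k=\delta$ and induct on its length, invoking Lemma~\ref{le:ReduccionesPreservanCircuitos} at each step. Your additional remarks on the subgraph side condition and termination are accurate but not needed for the argument.
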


\begin{proof}
	Suppose $\delta\in X_\gamma$.
	Then by definition $\gamma\stackrel{*}{\rightarrow}_C\delta$
	i.e. there are a non negative integer $k$ and paths $\delta_0,\delta_1,\dots, \delta_k$
	such that
	\[\gamma=\delta_0\rightarrow_C\delta_1\rightarrow_C\dots\rightarrow_C\delta_k=\delta\]
	We prove by induction in $k$ that $\delta$ is a circuit.
	
	If $k=0$ then $\gamma=\delta$ thus $\delta$ is a circuit.
	Notice that $k=1$ implies $\gamma\rightarrow_C\delta$ and by Lemma \ref{le:ReduccionesPreservanCircuitos},
	$\delta$ is a circuit.
	
	Suppose now that
	\[\gamma=\delta_0\rightarrow_C\delta_1\rightarrow_C\dots\rightarrow_C\delta_k\rightarrow_C\delta_{k+1}=\delta\]
	By Inductive Hypothesis $\delta_k$ is a circuit and by Lemma \ref{le:ReduccionesPreservanCircuitos}
	we have that $\delta=\delta_{k+1}$ is a circuit.
	
	Therefore every element in the family of reductions of $\gamma$ is a circuit.
	
\end{proof}

\begin{lema}\label{le:XGammaEsTransitivo}
	If $\gamma$ is a circuit and $\delta\in X_\gamma$ then $X_\delta\subseteq X_\gamma$.
\end{lema}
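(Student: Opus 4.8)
The plan is to unwind the definition of the family of reductions and reduce everything to the transitivity of the relation $\stackrel{*}{\rightarrow}_C$. Recall that, by definition, $\eta\in X_\gamma$ means two things simultaneously: $\eta\subseteq\gamma$ as a subgraph, and $\gamma\stackrel{*}{\rightarrow}_C\eta$. So the whole statement splits into two independent bookkeeping facts: that subgraph inclusion composes, and that $\stackrel{*}{\rightarrow}_C$ composes.

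First I would fix an arbitrary $\varepsilon\in X_\delta$ and extract from the hypotheses the four basic relations: $\delta\subseteq\gamma$ and $\gamma\stackrel{*}{\rightarrow}_C\delta$ (because $\delta\in X_\gamma$), together with $\varepsilon\subseteq\delta$ and $\delta\stackrel{*}{\rightarrow}_C\varepsilon$ (because $\varepsilon\in X_\delta$). Then I would observe that $\varepsilon\subseteq\delta\subseteq\gamma$ gives $\varepsilon\subseteq\gamma$ since the subgraph relation is transitive, and that $\gamma\stackrel{*}{\rightarrow}_C\delta\stackrel{*}{\rightarrow}_C\varepsilon$ gives $\gamma\stackrel{*}{\rightarrow}_C\varepsilon$ since $\stackrel{*}{\rightarrow}_C$ is, by its very definition as the reflexive, transitive closure of $\rightarrow_C$, a transitive relation. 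These two facts together are exactly the statement $\varepsilon\in X_\gamma$, and since $\varepsilon$ was arbitrary we conclude $X_\delta\subseteq X_\gamma$.

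If one prefers to avoid invoking transitivity of the closure as a black box, the alternative is a short induction on the length $k$ of a chain $\delta=\eta_0\rightarrow_C\eta_1\rightarrow_C\cdots\rightarrow_C\eta_k=\varepsilon$ witnessing $\delta\stackrel{*}{\rightarrow}_C\varepsilon$: the base case $k=0$ is $\varepsilon=\delta\in X_\gamma$, and the inductive step appends one more $\rightarrow_C$ step and uses that $\rightarrow_C$ implies the subgraph relation (each reduction is a subgraph, cf. Observation \ref{obs:CompactionsAndSetDifferences}) to keep the inclusion $\varepsilon\subseteq\gamma$ as well. Either route is routine; there is no real obstacle here, the only point requiring a moment's care is to remember that membership in $X_\gamma$ carries the subgraph condition in addition to the reducibility condition, so both must be propagated.
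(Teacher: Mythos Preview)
Your argument is correct and is precisely the routine unwinding that the paper has in mind: the paper's own proof consists of the single word ``Immediate.'' You have simply spelled out that immediacy---transitivity of the subgraph relation together with transitivity of $\stackrel{*}{\rightarrow}_C$---so there is nothing to add or correct.
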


\begin{proof}
	Immediate.
\end{proof}

\section{Perfectly Nested Circuits}\label{sec:CPA}
Nested circuits can be considered an extension on conceptual graphs \cite{Chein98}. Thus, perfectly nested circuits make reference to a kind of nested circuits with a particular structure that could be of interest in knowledge representation and manipulation of databases. \\
In this section we present and develop the concept of PNC in an appropriate context.

\begin{definicion}
	Let $\gamma=v_0v_1\dots v_n$ be a circuit.
	A pair $(i,j)$ with $0< i<j< n$ is an {\em intersection}
	of $\gamma$ if $v_i=v_j$.
	
	We denote by
	$I_\gamma$ the set of all the intersections of $\gamma$.
\end{definicion}

\begin{definicion}
	Define the function
	\begin{equation}
	\varphi: I_{\gamma}\longrightarrow V_G
	\end{equation}
	assigning the intersection $(i,j)$ to the vertex $v=v_i=v_j$.
	
	If $v=\varphi(i,j)$ then the vertex $v$ is {\em associated} to the intersection $(i,j)$.
	
	The vertex $v$ is {\em internal} if it is associated to some intersection $(i,j)$.
\end{definicion}

	Notice that set of all the internal vertices of $\gamma$ is $\varphi(I_\gamma)$.


\begin{figure}
	\begin{center}
		\begin{tikzpicture}[scale=.6]

		\coordinate (v0) at (-6,2);
		\coordinate (v1) at (-2,6);
		\coordinate (v2) at (2,6);
		\coordinate (v3) at (-2,4);
		\coordinate (v4) at (-4,0);
		\coordinate (v5) at (0,-2);
		\coordinate (v6) at (4,-2);
		\coordinate (v7) at (-2,0);
		\coordinate (v8) at (0,4);
		\coordinate (v9) at (2,4);
		\coordinate (v11) at (4,2);
		\coordinate (v13) at (6,2);
		\coordinate (v14) at (6,-2);
		\coordinate (v15) at (2,-4);
		\coordinate (v16) at (-2,-4);
		\coordinate (v17) at (-6,-2);
		
		\draw[fill=black] (v0) circle (2pt) node[left] {$v_0=v_{18}$};
		\draw[fill=black] (v1) circle (2pt) node[left] {$v_1$};
		\draw[fill=black] (v2) circle (2pt) node[above] {$v_2=v_{12}$};
		\draw[fill=black] (v3) circle (2pt) node[above] {$v_3$};
		\draw[fill=black] (v4) circle (2pt) node[left] {$v_4$};
		\draw[fill=black] (v5) circle (2pt) node[above] {$v_5$};
		\draw[fill=black] (v6) circle (2pt) node[below left] {$v_6=v_{10}$};
		\draw[fill=black] (v7) circle (2pt) node[above left] {$v_7$};
		\draw[fill=black] (v8) circle (2pt) node[above] {$v_8$};
		\draw[fill=black] (v9) circle (2pt) node[above] {$v_9$};
		\draw[fill=black] (v11) circle (2pt) node[right] {$v_{11}$};
		\draw[fill=black] (v13) circle (2pt) node[right] {$v_{13}$};
		\draw[fill=black] (v14) circle (2pt) node[right] {$v_{14}$};
		\draw[fill=black] (v15) circle (2pt) node[above] {$v_{15}$};
		\draw[fill=black] (v16) circle (2pt) node[above] {$v_{16}$};
		\draw[fill=black] (v17) circle (2pt) node[left] {$v_{17}$};
		
		\draw (v0) -- (v1) -- (v2) -- (v3) -- (v4) -- (v5)-- (v6) -- (v7)-- (v8)-- (v9)	-- (v6)-- (v11)-- (v2)-- (v13)-- (v14)-- (v15)-- (v16)-- (v17)-- (v0);

		\end{tikzpicture}
		\caption{Perfectly Nested Circuit} \label{fig:CircuitoPerfectamenteAnidado}
	\end{center}
\end{figure}
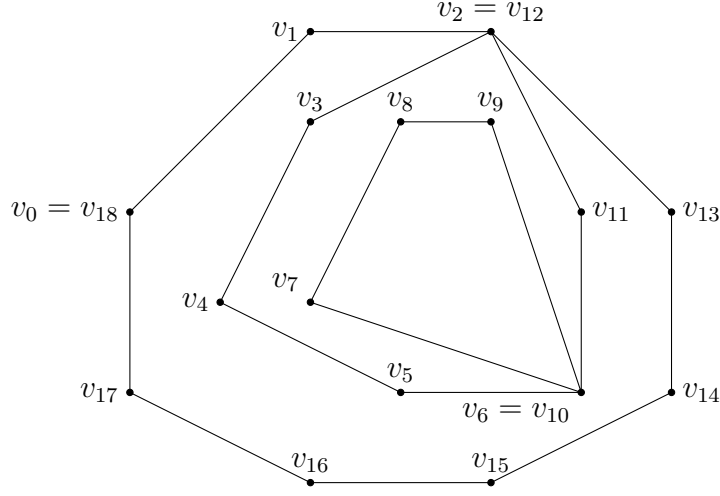

\begin{definicion}[Perfectly Nested Circuits]
	A circuit $\gamma=v_0v_1\ldots v_n$ is {\em perfectly nested} ({\em a PNC}) if
	it is either a simple cycle\footnote{A simple cycle is a cycle in which the vertices appear only once, except the beginning and the end of the cycle.} or there is a sequence
	\[
	0=k_0<k_1< k_2<\dots < k_m<k'_m<\dots <k'_1<k'_0=n
	\]
	such that:
	\begin{enumerate}
		\item $\varphi(I_\gamma)=\set{v_{k_1},\ldots,v_{k_m}}$.
		\item $\varphi(k_i,k_i')=v_{k_i}$.
		\item $i\neq j\implies v_{k_i}\neq v_{k_j}$.
	\end{enumerate}
	The sequence
	\[
	k_1,k_2,\dots,k_m,k'_m,\dots, k'_1
	\]
	is the {\em internal sequence} of $\gamma$.
\end{definicion}

 When $\gamma$ is not a simple cycle it is a {\em non trivial} PNC.

\begin{observacion}\label{obs:NestedSubcircuits}
Notice that sub-circuit $[k_{i+1},k_{i+1}']$ is a sub-circuit of the sub-circuit $[k_{i},k_{i}']$
for each $0\leq i<m$.
\end{observacion}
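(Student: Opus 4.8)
The plan is to reduce the statement to a single re-indexing argument once the definitions are unwound. Fix a non-trivial PNC $\gamma=v_0v_1\dots v_n$ with internal sequence $k_1,\dots,k_m,k_m',\dots,k_1'$ arising from the chain $0=k_0<k_1<\dots<k_m<k_m'<\dots<k_1'<k_0'=n$, and fix $i$ with $0\le i<m$. The first step is to extract from this chain the inequalities we will need: $k_i<k_{i+1}$, $k_{i+1}<k_{i+1}'$ and $k_{i+1}'<k_i'$, so that $k_i<k_{i+1}<k_{i+1}'<k_i'$. The second step is to check that $[k_i,k_i']$ is genuinely a circuit, so that speaking of a sub-circuit of it makes sense: for $i\ge 1$ the pair $(k_i,k_i')$ lies in $I_\gamma$ (clause (2) of the PNC definition presupposes this), hence $[k_i,k_i']$ is a proper sub-circuit of $\gamma$, namely the external reduction of $\gamma$ at $k_i$, and Lemma~\ref{le:ReduccionesPreservanCircuitos} makes it a circuit; for $i=0$ we simply have $[k_0,k_0']=\gamma$.

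The core step is the translation of indices. Write $\delta=[k_i,k_i']=w_0w_1\dots w_N$ with $w_t=v_{k_i+t}$ and $N=k_i'-k_i$, and put $s=k_{i+1}-k_i$, $s'=k_{i+1}'-k_i$. The inequalities from the first step translate into $0<s<s'<N$, and by construction $w_s=v_{k_{i+1}}$ and $w_{s'}=v_{k_{i+1}'}$. Clause (2) of the PNC definition gives $\varphi(k_{i+1},k_{i+1}')=v_{k_{i+1}}$, hence $v_{k_{i+1}}=v_{k_{i+1}'}$, so $w_s=w_{s'}$ with $s\neq s'$. Applying the definition of sub-circuit to the circuit $\delta$ with the pair $(s,s')$, the walk $w_sw_{s+1}\dots w_{s'}$ is a sub-circuit of $\delta$.

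To finish, I would observe that $w_sw_{s+1}\dots w_{s'}=v_{k_{i+1}}v_{k_{i+1}+1}\dots v_{k_{i+1}'}$ is exactly the walk denoted $[k_{i+1},k_{i+1}']$, so $[k_{i+1},k_{i+1}']$ is a sub-circuit of $[k_i,k_i']$, which is the claim. Since moreover $s\ge 1$ we have $(s,s')\neq(0,N)$, so it is in fact a proper sub-circuit, although the statement does not ask for that.

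There is no real mathematical difficulty here; the one point I would be careful to spell out is that the bracket notation $[p,q]$ is relative to a fixed enclosing circuit, so the assertion ``$[k_{i+1},k_{i+1}']$ is a sub-circuit of $[k_i,k_i']$'' is a statement about the underlying closed walks, and the argument must shift indices by $k_i$ when passing from $\gamma$ to $\delta$ and then verify that the shifted pair $(s,s')$ still satisfies both the ordering $0<s<s'<N$ and the vertex coincidence $w_s=w_{s'}$.
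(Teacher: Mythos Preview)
Your argument is correct. The paper does not supply a proof for this observation at all; it is stated as immediate from the chain of inequalities $0=k_0<k_1<\dots<k_m<k_m'<\dots<k_1'<k_0'=n$ in the definition of a PNC, so your detailed verification---including the care you take over re-indexing when passing from $\gamma$ to $\delta=[k_i,k_i']$---is simply the explicit unpacking of what the authors leave to the reader.
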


\begin{ejemplo}
	Figure \ref{fig:CircuitoPerfectamenteAnidado} shows a PNC $\gamma$.
	Given the sequence $k_1=2,k_2=6,k_2'=10,k_1'=12$:
	\begin{enumerate}
		\item The set of intersections is $I_\gamma=\set{(2,12),(6,10)}$ and their images are
		 $\varphi(2,12)=v_2$ and  $\varphi(6,10)=v_6$.
			Hence $\varphi(I_\gamma)=\set{v_{k_1},v_{k_2}}$.
		\item As we saw $\varphi(2,12)=v_2$ and $\varphi(6,10)=v_6$ 	
		i.e. $\varphi(v_{k_1},v_{k_1'})=v_{k_1}$ and $\varphi(v_{k_2},v_{k_2'})=v_{k_2}$.
		\item $v_{k_1}=v_2\neq v_{k_2}=v_6$.
	\end{enumerate}
	
\end{ejemplo}

\begin{definicion}
	Given a circuit $\gamma=v_0v_1\dots v_n$ the relation $<_I$
	is defined in $\set{0,\dots, n}^2$ by:
	\begin{equation}
	(i,j)<_I(k,\ell)\iff k< i<j<\ell
	\end{equation}
\end{definicion}

\begin{lema}
	The relation $<_I$ is a partial order on $\set{0,\dots, n}^2$
\end{lema}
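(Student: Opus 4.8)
The plan is to verify the three defining properties of a partial order—reflexivity, antisymmetry, and transitivity—directly from the definition $(i,j)<_I(k,\ell)\iff k<i<j<\ell$. A subtlety worth noting at the outset is that $<_I$ as written is irreflexive (a strict order), since $(i,j)<_I(i,j)$ would require $i<i$, which is false; so the intended claim is that $<_I$ is a strict partial order, and I would prove irreflexivity, antisymmetry, and transitivity. (If instead the paper means the reflexive closure $\leq_I$, one adjusts by adding the diagonal; the argument is essentially the same.)

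First I would dispatch irreflexivity: if $(i,j)<_I(i,j)$ then by definition $i<i$, contradicting the irreflexivity of $<$ on the integers. Next, antisymmetry (in the strict sense, asymmetry): suppose both $(i,j)<_I(k,\ell)$ and $(k,\ell)<_I(i,j)$. The first gives $k<i$ and the second gives $i<k$, which is impossible. Hence the two relations cannot both hold, and in particular $(i,j)<_I(k,\ell)$ together with $(k,\ell)<_I(i,j)$ forces $(i,j)=(k,\ell)$ vacuously.

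The only step requiring any actual chaining of inequalities is transitivity. Suppose $(i,j)<_I(k,\ell)$ and $(k,\ell)<_I(p,q)$. Unfolding the definitions, the first gives $k<i<j<\ell$ and the second gives $p<k<\ell<q$. From $p<k$ and $k<i$ we get $p<i$; from $j<\ell$ and $\ell<q$ we get $j<q$; and $i<j$ holds already. Therefore $p<i<j<q$, i.e. $(i,j)<_I(p,q)$, as desired.

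I do not expect any genuine obstacle here; the one thing to be careful about is the strict-versus-reflexive reading of ``partial order'', and to state explicitly which convention is in force before running the verification. Everything else is a direct application of the transitivity and irreflexivity of the usual order on $\{0,\dots,n\}$.
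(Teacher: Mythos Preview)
Your proposal is correct and matches the paper's approach: the paper's own proof is simply ``Straight forward,'' and what you have written is exactly the routine verification that this phrase stands in for. Your remark about the strict-versus-reflexive reading of ``partial order'' is a reasonable caveat, since the paper uses the strict notation $<_I$ throughout.
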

\begin{proof}
	Straight forward.
\end{proof}

\begin{definicion}
	If $u=\varphi(i,j)$ and $v=\varphi(k,\ell)$ are two internal vertices in a PNC $\gamma$ then:
	\begin{equation}
	u\prec v\iff (i,j)<_I(k,\ell)
	\end{equation}
	if $u\prec v$ then $u$ is {\em more internal}
	than $v$ or, equivalently, we say $v$ is {\em more external}
	than $u$.
\end{definicion}

This relation $\prec$ is well defined because $\gamma$ is a PNC,
so if $v$ is an internal vertex then there is a unique $i$ such that
$v=\varphi(k_i,k'_i)$.

\begin{lema}\label{le:InternalOrderCharacterization}
	Let $u$, $v$ be two vertices in a PNC $\gamma$
	with internal sequence
	\[k_1,\dots, k_m,k'_m,\dots, k'_1\]
	Then
	$u\prec v$ iff there are indexes $k_n<k_p$ in the internal sequence
	such that $u=k_p,v=k_n$.
\end{lema}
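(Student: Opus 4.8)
The plan is to reduce everything to the definition of $<_I$ together with the monotonicity of the internal sequence. Since $\gamma$ is a PNC and both $u\prec v$ and the right-hand side of the claimed equivalence force $u$ and $v$ to be internal vertices, I would start by invoking the remark following the definition of $\prec$: each internal vertex has a \emph{unique} representation $u=v_{k_p}=\varphi(k_p,k_p')$ and $v=v_{k_n}=\varphi(k_n,k_n')$ with $k_p,k_n$ among $k_1,\dots,k_m$. With this normal form fixed, the definition of $\prec$ reads $u\prec v\iff (k_p,k_p')<_I(k_n,k_n')\iff k_n<k_p<k_p'<k_n'$.

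Next I would exploit the defining chain of inequalities of the internal sequence,
\[
0=k_0<k_1<\dots<k_m<k_m'<\dots<k_1'<k_0'=n,
\]
to collapse $k_n<k_p<k_p'<k_n'$ into the single inequality $k_n<k_p$. Concretely: (i) $k_p<k_p'$ holds unconditionally, since $k_p\le k_m<k_m'\le k_p'$; (ii) because $p\mapsto k_p$ is strictly increasing, $k_n<k_p\iff n<p$; and (iii) because $p\mapsto k_p'$ is strictly decreasing (the primed part of the chain is listed in reverse order), $k_p'<k_n'\iff n<p$. Hence $k_n<k_p<k_p'<k_n'$ is equivalent to $n<p$, which is in turn equivalent to $k_n<k_p$. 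Combining this with the previous paragraph yields $u\prec v\iff k_n<k_p$, i.e. $u\prec v$ exactly when there are indices $k_n<k_p$ in the internal sequence with $u=v_{k_p}$ and $v=v_{k_n}$, which is the claim.

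I do not anticipate a genuine obstacle here; the only point I would state explicitly rather than gloss over is the well-definedness of the normal form. It relies on clauses (1)--(3) of the definition of PNC — so that $\varphi(I_\gamma)=\{v_{k_1},\dots,v_{k_m}\}$ with the $v_{k_i}$ pairwise distinct — which is precisely what makes the index $p$ with $u=\varphi(k_p,k_p')$ unique (and likewise for $n$). Once that is granted, the proof is pure bookkeeping with the ordered internal sequence, and the three monotonicity observations (i)--(iii) are the substance of the argument.
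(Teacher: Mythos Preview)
Your argument is correct and follows essentially the same approach as the paper: fix the unique representation $u=\varphi(k_p,k_p')$, $v=\varphi(k_n,k_n')$, unpack $\prec$ via $<_I$, and then use the monotonicity of the internal sequence to reduce $k_n<k_p<k_p'<k_n'$ to the single inequality $k_n<k_p$. The paper's version is terser (it simply asserts ``since $k_n<k_p$ we have that $k_p'<k_n'$'' for the backward direction), whereas you spell out the monotonicity observations (i)--(iii) explicitly; but the substance is identical.
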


\begin{proof}
	Since $u$ and $v$ are internal vertices of $\gamma$
	there are indexes $k_p,k_n$ in the internal sequence
	such that $u=\varphi(k_p,k_p')$ and $v=\varphi(k_n,k_n')$.
	If $u\prec v$ then by definition $(k_p,k_p')<_I(k_n,k_n')$
	thus $k_n<k_p$.
	
	On the other hand if $u=v_{k_p}$ and $v=v_{k_n}$ with $k_n<k_p$
	then $u=\varphi(k_p,k_p')$ and $v=\varphi(k_n,k_n')$.
	Since $k_n<k_p$ we have that $k_p'<k_n'$ hence $(k_p,k_p')<_I(k_n,k_n')$ and $u\prec v$.	
\end{proof}

\begin{definicion}
	If $\gamma$ is a PNC with internal sequence $k_1,k_2,\dots, k_m,k_m',\dots, k'_1$ then
	$v_{k_1}$ and $v_{k_m}$ are respectively its {\em outermost} and {\em innermost} vertices.
\end{definicion}

\begin{teorema}
	The binary relation `$\prec$' is a total order on the set of the internal vertices of a PNC $\gamma$.
\end{teorema}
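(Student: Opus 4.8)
The plan is to split the statement into the partial‑order axioms, which come essentially for free, and totality, which is where the perfectly‑nested hypothesis does the real work.

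First I would record that $\prec$ is a (strict) partial order. The assignment sending an internal vertex $v$ to the unique pair $(k_i,k_i')$ with $v=\varphi(k_i,k_i')$ is well defined and injective — this is precisely the remark following the definition of $\prec$, and it rests on conditions (1)--(3) of the PNC definition. Since $\prec$ is by construction the pullback of $<_I$ along this injection, irreflexivity, antisymmetry and transitivity of $\prec$ follow at once from the already‑established fact that $<_I$ is a partial order. If $\gamma$ is a simple cycle it has no intersections, hence no internal vertices, and $\prec$ is the empty relation on the empty set, which is vacuously a total order; so assume henceforth that $\gamma$ is non‑trivial with internal sequence $k_1,\dots,k_m,k_m',\dots,k_1'$.

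Next I would establish totality. Let $u\neq v$ be internal vertices. By condition (1) the set of internal vertices is $\set{v_{k_1},\dots,v_{k_m}}$, so $u=v_{k_p}$ and $v=v_{k_q}$ for some $p,q\in\set{1,\dots,m}$, and by condition (3) the inequality $u\neq v$ forces $p\neq q$. Since $k_1<k_2<\dots<k_m$, exactly one of $k_q<k_p$ and $k_p<k_q$ holds. In the first case Lemma~\ref{le:InternalOrderCharacterization} yields $u\prec v$; in the second it yields $v\prec u$. Hence any two distinct internal vertices are $\prec$‑comparable, and together with the previous paragraph this shows $\prec$ is a total order on the internal vertices of $\gamma$.

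The argument is short because the substantive work has been pushed into the lemma that $<_I$ is a partial order and into Lemma~\ref{le:InternalOrderCharacterization}; the one delicate point — and the place where a careless proof could go wrong — is the appeal to condition (3) of the PNC definition, which is exactly what guarantees that two distinct internal vertices cannot be indexed by the same $k_i$, so that the trichotomy on the indices $k_1<\dots<k_m$ transfers faithfully to a trichotomy on the internal vertices.
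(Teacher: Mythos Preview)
Your proof is correct and follows essentially the same route as the paper, hinging on Lemma~\ref{le:InternalOrderCharacterization} and the strict ordering $k_1<\dots<k_m$ for trichotomy. The only difference is packaging: you obtain the partial-order axioms all at once by pulling back $<_I$ along the injection $v\mapsto(k_i,k_i')$, whereas the paper verifies irreflexivity, antisymmetry and transitivity one by one (the latter two via Lemma~\ref{le:InternalOrderCharacterization}).
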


\begin{proof}
	Let $k_1,\dots, k_m,k'_m,\dots, k'_1$
	be the internal sequence of $\gamma$.
	We check the order properties:
	\begin{enumerate}
		\item Irreflexivity: Suppose $v$ is an internal vertex. Then $v=\varphi(k_i,k_i')$ for some index $k_i$
		and $v\nprec v$ since $(k_i,k_i)\centernot<_I(k_i,k_i)$.
		
		\item Antisymmetry:Let $u,v$ be internal vertices of $\gamma$
		such that $u\prec v$.
		By Lemma \ref{le:InternalOrderCharacterization} there are indexes $k_i,k_j$ in the internal sequence
		with $k_j<k_i$ such that $u=v_{k_i}$ and $v=v_{k_j}$.
		Since $k_i\nless k_j$ we have, again by Lemma \ref{le:InternalOrderCharacterization},
		that $v\nprec u$.

		\item Transitivity: If $u,v,w$ are three internal vertices,
		then there are indices $k_i,k_j,k_\ell$ such that
		$u=v_{k_i}, v=v_{k_j}, w=v_{k_\ell}$.
		
		If $u\prec v$ and $v\prec w$ then, by Lemma \ref{le:InternalOrderCharacterization},
		follows that $k_j<k_i$ and $k_\ell<k_j$. Hence $k_\ell<k_i$ and $u\prec w$ by Lemma \ref{le:InternalOrderCharacterization}.

		\item Trichotomy:
		Suppose $u,v$ are two different internal vertices,
		thus $u=v_{k_i}$ and $v=v_{k_j}$ with $k_i\neq k_j$.
		Hence $k_i<k_j$ or $k_j<k_i$. If $k_i<k_j$ then $v\prec u$ and if $k_j<k_i$ then $u\prec v$.
		It is immediate that it is impossible $k_j<k_i$ and $k_i<k_j$ at the same time,
		hence one and only one of the following is satisfied
		\[
		u=v\quad \text{or}\quad u\prec v\quad \text{or}\quad v\prec u
		\]
		
	\end{enumerate}
\end{proof}


\begin{teorema}\label{teo:ReductionsPreservePncProperty}
	If $\gamma$ is a PNC then each element of $X_\gamma$ is a PNC.
\end{teorema}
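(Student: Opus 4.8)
The plan is to reduce at once to a single reduction step and then keep careful track of the internal sequence. Since $\rightarrow_C$ terminates and every element of $X_\gamma$ is reached from $\gamma$ by a finite chain $\gamma=\delta_0\rightarrow_C\delta_1\rightarrow_C\dots\rightarrow_C\delta_k=\delta$, a straightforward induction on $k$ (exactly as in the proof that each element of $X_\gamma$ is a circuit) reduces the theorem to the one-step statement: \emph{if $\gamma$ is a PNC and $\gamma\rightarrow_C\delta$, then $\delta$ is a PNC}. If $\gamma$ is a simple cycle it has no proper sub-circuit, hence no reduction at all, so the statement holds vacuously; thus I may assume $\gamma=v_0v_1\dots v_n$ is a non-trivial PNC with internal sequence $0=k_0<k_1<\dots<k_m<k'_m<\dots<k'_1<k'_0=n$.

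The first — and essentially the only delicate — step is to pin down the proper sub-circuits of $\gamma$. I would establish the structural description that the only coincidences among $v_0,\dots,v_n$ are $v_0=v_n$ together with $v_{k_\ell}=v_{k'_\ell}$ for $1\le\ell\le m$; equivalently, $I_\gamma=\set{(k_1,k'_1),\dots,(k_m,k'_m)}$ and each internal vertex $v_{k_\ell}$ occurs in $\gamma$ exactly at the positions $k_\ell$ and $k'_\ell$. This combines conditions (1)–(3) of the definition of a PNC with the fact that $\gamma$, being a circuit, has no repeated edge, which is what rules out ``spurious'' extra occurrences of a repeated vertex. The immediate payoff is that the proper sub-circuits of $\gamma$ are precisely $[k_\ell,k'_\ell]$ for $1\le\ell\le m$; hence every $\delta$ with $\gamma\rightarrow_C\delta$ is the internal or the external reduction of $\gamma$ at some $k_\ell$ with $1\le\ell\le m$, and, since $k_\ell\ge k_1>0$, the degenerate case $i=0$ of an internal reduction never arises.

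It remains to handle the two reduction types at a fixed $k_\ell$ and exhibit the new internal sequence. For the external reduction $\delta=v_{k_\ell}v_{k_\ell+1}\dots v_{k'_\ell}$, I would reindex $w_t=v_{t+k_\ell}$ for $0\le t\le N:=k'_\ell-k_\ell$: if $\ell=m$ then $[k_m,k'_m]$ has no intersection, so $\delta$ is a simple cycle; if $\ell<m$ then, by the structural step, the intersections of $\delta$ are exactly the pairs $(k_a-k_\ell,\,k'_a-k_\ell)$ with $\ell<a\le m$, and one checks that $k_{\ell+1}-k_\ell<\dots<k_m-k_\ell<k'_m-k_\ell<\dots<k'_{\ell+1}-k_\ell$ is an internal sequence for $\delta$ satisfying (1)–(3), the required strict inequalities and the injectivity in (3) being inherited from $\gamma$. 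For the internal reduction $\delta=v_0\dots v_{k_\ell}v_{k'_\ell+1}\dots v_n$, put $d=k'_\ell-k_\ell$ (the number of deleted sequence positions $k_\ell+1,\dots,k'_\ell$; this removes the second occurrence of $v_{k_\ell}$, so $v_{k_\ell}$ is no longer internal, and also deletes the vertices sitting at $k_{\ell+1},\dots,k_m,k'_m,\dots,k'_{\ell+1}$); positions $\le k_\ell$ are unchanged and positions $>k'_\ell$ drop by $d$. If $\ell=1$, $\delta$ is a simple cycle; otherwise, again by the structural step, the surviving coincidences yield intersections $(k_a,\,k'_a-d)$ for $1\le a<\ell$, and a short computation (using e.g. $k_{\ell-1}-k_\ell<k'_{\ell-1}-k'_\ell$) shows that $k_1<\dots<k_{\ell-1}<k'_{\ell-1}-d<\dots<k'_1-d$ is an internal sequence for $\delta$ verifying (1)–(3). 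This settles the one-step case, hence the theorem.

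I expect the real obstacle to be the structural description in the second paragraph: once the proper sub-circuits of $\gamma$ are identified with the nested family $[k_\ell,k'_\ell]$ and the occurrences of each repeated vertex are controlled, everything afterwards is bookkeeping about reindexing. Ruling out extra occurrences of an already-repeated vertex is where one genuinely uses that $\gamma$ is a circuit rather than merely a closed walk satisfying (1)–(3), so care with that point is essential.
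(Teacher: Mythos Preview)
Your approach is essentially the paper's: reduce by induction to a single step $\gamma\rightarrow_C\delta$, note that the proper sub-circuits of a PNC are exactly the $[k_\ell,k'_\ell]$ (so any reduction is at some $k_\ell$), and then check that each of the two reduction types at $k_\ell$ yields a PNC. The paper is terser---it asserts the sub-circuit characterization as part of an unproven ``iff'' reformulation and simply says the internal reduction ``preserves properties (1)--(4)''---whereas you spell out the reindexed internal sequences explicitly and correctly flag the structural description of $I_\gamma$ as the one point that genuinely needs justification.
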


\begin{proof}
	It is enough to prove that if
	$\gamma\rightarrow_C\delta$ and $\gamma$ are cnf then $\delta$ is a cnf.
	
	$\gamma$  is a PNC iff it has an internal sequence
	$k_1,\dots, k_m,k'_m,\dots, k'_1$ such that
	\begin{enumerate}
		\item A vertex $v$ is internal iff $v=v_{k_j}$ for some $k_j$,\label{it:InternalSequenceProperty1}
		\item $v_{k_j}=v_{k'_j}$,\label{it:InternalSequenceProperty2}
		\item the only proper sub-circuits of $\gamma$ are of the form $[k_j,k_j']$ and \label{it:InternalSequenceProperty3}
		\item $v_{k_j}\neq v_{k_\ell}$ if $j\neq \ell$.\label{it:InternalSequenceProperty4}
	\end{enumerate}
	
	Suppose that $\gamma\rightarrow_C\delta$. We have to examine two cases:
	
	If $\delta$ is an internal reduction of $\gamma$
	then it necessarily must be the reduction at $k_i$ for some $i$ with
	$1\leq i\leq m$ because the only proper sub-circuits of $\gamma$ have the form $[k_i,k_i']$.
	In this case we have, due to Observation \ref{obs:CompactionsAndSetDifferences}, that
	\[
	\delta=\gamma-\set{v_{k_i+1},\dots v_{k'_i-1}}
	\]
	Notice this operation preserves properties \ref{it:InternalSequenceProperty1} to \ref{it:InternalSequenceProperty4}
	thus $\delta$ is a PNC.
	
	If $\delta$ is an external reduction, then it is a proper sub-circuit of $\gamma$,
	hence it is $\delta=v_{k_i}v_{k_i+1}\dots v_{k_i'}$ for some $i$.
	As a consequence of $\gamma$ being a PNC:
	\begin{enumerate}
		\item The internal vertices of $\delta$ are $v_{k_{i+1}},v_{k_{i+2}},\dots v_{k_m}$,
		\item $v_{k_j}=v_{k'_j}$ for each $i<j\leq m$,
		\item the only proper sub-circuits of $\delta$ have the form $[k_j,k_j']$ with $i<j\leq m$, and
		\item $v_{k_j}\neq v_{k_\ell}$ whenever $j\neq \ell$.
	\end{enumerate}
	Therefore $\delta$ is a PNC.
	
	Hence if $\gamma$ is a PNC and $\gamma\rightarrow_C\delta$ then $\delta$ is a PNC.
	Thus if $\gamma$ is a PNC and $\gamma\stackrel{*}{\rightarrow}_C\delta$ it is easy to prove with an inductive argument that
	$\delta$ is a PNC.
	
	Therefore every element of $X_\gamma$ is a PNC.
\end{proof}

\begin{definicion}
	Given a circuit $\gamma$ in a graph, we define the binary relation $\leq_\gamma$
	on $X_\gamma$:
	\[
	\xi \leq_\gamma \eta \iff \eta\stackrel{*}{\rightarrow}_C\xi
	\]
\end{definicion}

\begin{teorema}
	The relation `$\leq_\gamma$' is a partial order on $X_\gamma$.
\end{teorema}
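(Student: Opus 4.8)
The plan is to check the three defining properties of a partial order directly from the definition $\xi \leq_\gamma \eta \iff \eta \stackrel{*}{\rightarrow}_C\xi$, using only the elementary properties of the transitive--reflexive closure recalled at the beginning of Section \ref{sec:Reducciones} together with the fact, observed right after the definition of $\rightarrow_C$, that $\gamma\rightarrow_C\delta$ implies $|\delta|<|\gamma|$. For \emph{reflexivity}, note that for every $\xi\in X_\gamma$ we have $\xi\stackrel{0}{\rightarrow}_C\xi$, and since $\stackrel{*}{\rightarrow}_C$ contains the identity relation this gives $\xi\stackrel{*}{\rightarrow}_C\xi$, i.e. $\xi\leq_\gamma\xi$. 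For \emph{transitivity}, suppose $\xi\leq_\gamma\eta$ and $\eta\leq_\gamma\zeta$, that is $\eta\stackrel{*}{\rightarrow}_C\xi$ and $\zeta\stackrel{*}{\rightarrow}_C\eta$; since $\stackrel{*}{\rightarrow}_C$ is transitive by construction, $\zeta\stackrel{*}{\rightarrow}_C\xi$, hence $\xi\leq_\gamma\zeta$. (All elements involved remain in $X_\gamma$ by Lemma \ref{le:XGammaEsTransitivo}, although for the statement it suffices that they are assumed to lie in $X_\gamma$ from the start.)

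The only step with real content is \emph{antisymmetry}. Suppose $\xi\leq_\gamma\eta$ and $\eta\leq_\gamma\xi$, so $\eta\stackrel{*}{\rightarrow}_C\xi$ and $\xi\stackrel{*}{\rightarrow}_C\eta$. Writing $\stackrel{*}{\rightarrow}_C\,=\,\stackrel{+}{\rightarrow}_C\cup\stackrel{0}{\rightarrow}_C$, assume for contradiction that $\xi\neq\eta$. Then neither derivation can be the identity step, so both must be instances of $\stackrel{+}{\rightarrow}_C$; hence there are chains $\eta=\delta_0\rightarrow_C\delta_1\rightarrow_C\cdots\rightarrow_C\delta_k=\xi$ and $\xi=\varepsilon_0\rightarrow_C\varepsilon_1\rightarrow_C\cdots\rightarrow_C\varepsilon_\ell=\eta$ with $k,\ell\geq 1$. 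Applying the size-decrease observation to each link of the first chain yields $|\xi|<|\eta|$, and to each link of the second chain yields $|\eta|<|\xi|$, which is impossible. Therefore $\xi=\eta$.

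I expect antisymmetry to be the main (indeed essentially the only) obstacle, and it rests precisely on the strict decrease of the number of vertices under $\rightarrow_C$, i.e. on the termination argument given immediately after the definition of $\rightarrow_C$; reflexivity and transitivity are purely formal consequences of the closure properties of $\stackrel{*}{\rightarrow}_C$.
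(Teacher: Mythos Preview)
Your proof is correct and follows essentially the same approach as the paper: reflexivity and transitivity come directly from the corresponding properties of $\stackrel{*}{\rightarrow}_C$, and antisymmetry is obtained from the strict decrease in size under $\rightarrow_C$. The only cosmetic difference is that the paper phrases antisymmetry directly (from $|\xi|\leq|\eta|$ and $|\eta|\leq|\xi|$ it concludes $|\xi|=|\eta|$ and hence $\xi\stackrel{0}{\rightarrow}_C\eta$), whereas you argue by contradiction, but the content is identical.
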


\begin{proof}
	Reflexivity and transitivity are direct from the same properties for
	$\stackrel{*}{\rightarrow}_C$.
	To prove reflexivity notice that $\xi\stackrel{*}{\rightarrow}_C\xi$
	because $\stackrel{*}{\rightarrow}_C$ is reflexive thus $\xi\leq_\gamma\xi$
	for each $\xi\in X_\gamma$.
	
	For transitivity suppose that $\xi\leq_\gamma\eta$ and $\eta\leq_\gamma\mu$.
	Then $\eta\stackrel{*}{\rightarrow}_C\xi$ and $\mu\stackrel{*}{\rightarrow}_C\eta$.
	Since $\stackrel{*}{\rightarrow}_C$ is transitive, we have $\mu\stackrel{*}{\rightarrow}_C\xi$
	hence $\xi\leq_\gamma\mu$.
	
	It remains to prove antisymmetry.
	Suppose that $\xi\leq_\gamma\eta$ and $\eta\leq_\gamma\xi$.
	It implies that $\eta\stackrel{*}{\rightarrow}_C\xi$ and $\xi\stackrel{*}{\rightarrow}_C\eta$.
	We know by Observation \ref{obs:CompactionsAndSetDifferences} that
	$|\eta|\leq|\xi|$ and $|\xi|\leq |\eta|$ so $|\eta|=|\xi|$.
	As $\xi\stackrel{*}{\rightarrow}_C\eta$
	then it must be $\xi\stackrel{0}{\rightarrow}_C\eta$.
	Thus $\eta=\xi$.
	
	Henceforth $\leq_\gamma$ is a partial order on $X_\gamma$.
\end{proof}

\begin{observacion}\label{obs:PredecesoresEnElOrdenDeCircuitos}
	If $\gamma$ is a PNC and $\delta\in X_\gamma$:
	\begin{enumerate}
		\item If $\delta$ is a cycle then it is a minimal element in $\leq_\gamma$ i.e.
		it has no predecessors.
		\item If $\delta$ is not a cycle,  then its immediate predecessors are
		the circuits obtained by reducing $\delta$ at its outermost and
		innermost vertices.
		\item $\gamma$ is the maximum element in the order $\leq_\gamma$.	
	\end{enumerate}
\end{observacion}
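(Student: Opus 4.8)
The plan is to dispatch clauses (1) and (3) quickly and then concentrate on (2). I would use throughout that every element of $X_\gamma$ is a PNC (Theorem~\ref{teo:ReductionsPreservePncProperty}), that $\rightarrow_C$ strictly decreases the number of vertices (Observation~\ref{obs:CompactionsAndSetDifferences}), and that the proper sub-circuits of a PNC with internal sequence $k_1<\dots<k_m<k'_m<\dots<k'_1$ are exactly the $[k_j,k'_j]$ (as established in the proof of Theorem~\ref{teo:ReductionsPreservePncProperty}). I would also record the remark that if $\xi<_\gamma\delta$ is witnessed by a chain $\delta\rightarrow_C\eta_1\rightarrow_C\cdots\rightarrow_C\xi$ of length $\ge 2$, then $\xi<_\gamma\eta_1<_\gamma\delta$, so an immediate predecessor of $\delta$ must be obtained from $\delta$ by a single reduction. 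Clause (1) is then immediate: a simple cycle has no two equal vertices except its endpoints, hence no proper sub-circuit, hence no reduction, hence no predecessor, so it is minimal. Clause (3) is a one-liner: $\gamma\in X_\gamma$ and $\gamma\stackrel{*}{\rightarrow}_C\delta$ for every $\delta\in X_\gamma$, i.e.\ $\delta\leq_\gamma\gamma$, so $\gamma$ is the maximum of $(X_\gamma,\leq_\gamma)$.

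For clause (2) I would let $\delta\in X_\gamma$ be a non-trivial PNC with internal sequence as above ($m\ge 1$), outermost vertex $v_{k_1}$ and innermost vertex $v_{k_m}$. By the previous paragraph its possible immediate predecessors are among its one-step reductions, namely the internal reductions $A_j=\delta-\set{v_{k_j+1},\dots,v_{k'_j-1}}$ and the external reductions $B_j=[k_j,k'_j]$, $1\le j\le m$. I would show that the immediate predecessors are precisely $B_1$ (the external reduction at $v_{k_1}$) and $A_m$ (the internal reduction at $v_{k_m}$), which for $m=1$ are just the two reductions at $v_{k_1}$. Two auxiliary facts I would pin down first: $v_0=v_n$ is distinct from every other vertex of $\delta$ (otherwise $[0,r]$ or $[r,n]$ would be a proper sub-circuit not among the $[k_j,k'_j]$), and $k'_j-k_j\ge 3$ because $\delta$ is a simple graph, so each deleted set $\set{v_{k_j+1},\dots,v_{k'_j-1}}$ is non-empty.

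To see $B_1$ is immediate I would argue by contradiction: if $\delta\rightarrow_C\eta\stackrel{+}{\rightarrow}_C B_1$ with $\eta\neq B_1$ then $V(B_1)=\set{v_{k_1},\dots,v_{k'_1}}\subseteq V(\eta)$, yet if $\eta=B_j$ with $j\ge 2$ the nesting of the sub-circuits (Observation~\ref{obs:NestedSubcircuits}) gives $V(B_j)\subsetneq V(B_1)$, and if $\eta=A_j$ then $V(\eta)$ omits the non-empty set $\set{v_{k_j+1},\dots,v_{k'_j-1}}\subseteq V(B_1)$ — a contradiction either way. Symmetrically, if $\delta\rightarrow_C\eta\stackrel{+}{\rightarrow}_C A_m$ with $\eta\neq A_m$: when $\eta=A_j$ with $j\neq m$ then $j<m$ and the nesting gives $V(A_j)\subsetneq V(A_m)$; when $\eta=B_j$ then $A_m\subseteq B_j\subseteq[k_1,k'_1]$, but $v_0\in V(A_m)\setminus V([k_1,k'_1])$ — impossible again.

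Finally I would rule out the remaining one-step reductions (nothing to do when $m=1$). For $2\le j\le m$, $B_1=[k_1,k'_1]$ is a non-trivial PNC whose internal sequence is that of $\delta$ with $k_1,k'_1$ deleted, so $[k_j,k'_j]$ is a proper sub-circuit of $B_1$ and reducing $B_1$ externally there returns $B_j$; hence $\delta\rightarrow_C B_1\rightarrow_C B_j$. For $1\le j\le m-1$, $A_m$ is a non-trivial PNC whose proper sub-circuits are the $[k_i,k'_i]$ with $i\le m-1$, and its internal reduction at $v_{k_j}$ deletes $\set{v_{k_j+1},\dots,v_{k'_j-1}}\setminus\set{v_{k_m+1},\dots,v_{k'_m-1}}$ from $A_m$, which by Observation~\ref{obs:CompactionsAndSetDifferences} together with the nesting equals $A_j$; hence $\delta\rightarrow_C A_m\rightarrow_C A_j$. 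This finishes clause (2). The step I expect to be the main obstacle is exactly this last one: after passing to $B_1$ or $A_m$ the vertices get re-indexed, so one must check carefully that the proper sub-circuits of $B_1$ (resp.\ $A_m$) are the expected $[k_j,k'_j]$ and that reducing there reproduces $B_j$ (resp.\ $A_j$), the key tools being the vertex-set-difference description of reductions (Observation~\ref{obs:CompactionsAndSetDifferences}) and the nesting $[k_{i+1},k'_{i+1}]\subseteq[k_i,k'_i]$ (Observation~\ref{obs:NestedSubcircuits}).
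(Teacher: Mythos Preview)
The paper records this statement as an \emph{Observaci\'on} and supplies no proof, so there is no argument of the paper's to compare against. Your proof is correct and fills the gap. Clauses (1) and (3) are exactly as immediate as you say. For clause (2) you correctly reduce the search for immediate predecessors to one-step reductions of $\delta$, identify the two survivors as the external reduction at the outermost internal vertex and the internal reduction at the innermost one (precisely what the paper later calls the $1$-reduction and the $0$-reduction), prove each is immediate by the vertex-set containment argument, and dispose of the remaining $A_j,B_j$ by factoring them through $A_m$ or $B_1$. The re-indexing issue you flag is genuine but you handle it correctly via the set-difference description of reductions in Observation~\ref{obs:CompactionsAndSetDifferences}.

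One remark worth making explicit: the wording ``reducing $\delta$ at its outermost and innermost vertices'' is ambiguous, since each internal vertex admits two reductions. Your proof in fact shows that for $m\ge 2$ the internal reduction at the outermost vertex ($A_1$) and the external reduction at the innermost vertex ($B_m$) are \emph{not} immediate predecessors, so the observation is true only under your reading (external at outermost, internal at innermost). That reading is the right one and is confirmed by the $0$/$1$-reduction terminology introduced in Section~\ref{sec:SucesionesBinarias&CPA}.
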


\begin{observacion}\label{obs:PNCWithSingleInternalVertex}
	If $\gamma$ is the circuit $v_0v_1\dots v_n$ with exactly one intersection $(i,j)$ then it is
	a PNC with $v_i$ as its only internal vertex.
	
	Notice that there are exactly two cycles in the circuit: the internal and the external
	reductions of $\gamma$ at $i$.
\end{observacion}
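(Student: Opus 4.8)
The plan is to treat the two assertions separately. \emph{That $\gamma$ is a PNC with $v_i$ as its only internal vertex.} Since $(i,j)$ is the unique intersection, $I_\gamma=\{(i,j)\}$, so $\varphi(I_\gamma)=\{v_i\}$ and $v_i$ is the only internal vertex; in particular $\gamma$ is not a simple cycle. To see it is a PNC I would exhibit the obvious internal sequence: put $k_0=0$, $k_1=i$, $k_1'=j$, $k_0'=n$. The chain $0=k_0<k_1<k_1'<k_0'=n$ holds because $(i,j)$ is an intersection; condition~(1) of the definition is the equality $\varphi(I_\gamma)=\{v_{k_1}\}$ just noted; condition~(2) is $\varphi(i,j)=v_i$, the definition of $\varphi$; and condition~(3) is vacuous, there being a single index. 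Hence $\gamma$ is a non-trivial PNC with internal sequence $i,j$.

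\emph{That the circuit contains exactly two cycles.} First I would pin down $X_\gamma$. The only proper sub-circuit of $\gamma$ is $[i,j]$: a proper sub-circuit comes from a coincidence $v_a=v_b$ with $a<b$ and $(a,b)\neq(0,n)$, and since $v_0$ is visited only at the two endpoints of $\gamma$, any such coincidence has $0<a<b<n$, i.e.\ $(a,b)\in I_\gamma=\{(i,j)\}$. Hence the only one-step reductions of $\gamma$ are the internal reduction $\delta_1=v_0\cdots v_iv_{j+1}\cdots v_n$ and the external reduction $\delta_2=v_iv_{i+1}\cdots v_j$ at $i$ (recall $i\geq1$), both circuits by Lemma~\ref{le:ReduccionesPreservanCircuitos} and PNCs by Theorem~\ref{teo:ReductionsPreservePncProperty}. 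Then I would show neither $\delta_1$ nor $\delta_2$ has an intersection — so each is a simple cycle and $X_\gamma=\{\gamma,\delta_1,\delta_2\}$ — by pulling back: an intersection of $\delta_1$ (resp.\ of $\delta_2$) would, after a short case analysis on which of its positions are involved, yield a coincidence $v_a=v_b$ at positions $0<a<b<n$ with $(a,b)\neq(i,j)$, a second intersection of $\gamma$ — impossible. Finally $\delta_1\neq\delta_2$: the vertex $v_{i+1}$ lies on $\delta_2$ (and is distinct from $v_i$, the graph being simple) while the same pull-back reasoning keeps it off $\delta_1$. So $X_\gamma$ has exactly two cycles, the internal and external reductions of $\gamma$ at $i$.

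The first assertion is routine; the work in the second is the pull-back argument bounding the intersections of $\delta_1$ and $\delta_2$, where the case analysis is short but one has to watch the boundary indices $i$ and $j$ of each reduction. The genuinely delicate point — which I would record as a standing convention — is that $v_0$ occurs only at the ends of $\gamma$: the definition of intersection ranges only over indices strictly between $0$ and $n$, so a priori a circuit could revisit $v_0$ internally and still have a single intersection, in which case $\delta_1$ would not be a simple cycle and the count of $2$ would fail. Under that convention (followed, e.g., in Figure~\ref{fig:CircuitoPerfectamenteAnidado}), $\gamma$ is a ``bowtie'' of two cycles glued at $v_i$, and the argument closes.
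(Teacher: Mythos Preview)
The paper states this as an \emph{observation} and gives no proof, so there is nothing on the paper's side to compare against; your write-up supplies strictly more than the paper does. Your verification that $\gamma$ is a PNC with internal sequence $i,j$ is routine and correct, and your pull-back argument for the second claim --- that any intersection of $\delta_1$ or $\delta_2$ would lift to a second intersection of $\gamma$ --- is the natural way to see that both reductions are simple cycles.

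You are also right to flag the issue about $v_0$. Because the paper's definition of \emph{intersection} requires $0<i<j<n$, a circuit that revisits $v_0$ at an interior index would still have ``exactly one intersection'' in the paper's sense, yet its internal reduction would fail to be a simple cycle. The paper never addresses this and tacitly assumes (as in Figure~\ref{fig:CircuitoPerfectamenteAnidado}) that $v_0$ appears only at the endpoints; your convention makes that explicit, and under it the argument closes. One minor simplification: for $\delta_1\neq\delta_2$ you could also just note that they are reductions of $\gamma$ by disjoint nonempty vertex sets (Observation~\ref{obs:CompactionsAndSetDifferences}), so their vertex sets differ.
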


\begin{ejemplo}
	We show that if $\gamma$ is not perfectly nested
	then each node can have more than two immediate predecessors in the order $\leq_\gamma$ .
	
	\begin{figure}
		\begin{center}
			\begin{tikzpicture}[scale=.6]
				
				
				\draw[fill=black] (-6,0) circle  (2pt) node[left] {$v_0=v_{18}$};
				\draw[fill=black] (-3,3) circle  (2pt) node[above left] {$v_1=v_5$};
				\draw[fill=black] (3,3) circle  (2pt) node[above] {$v_6$};
				\draw[fill=black] (6,0) circle  (2pt) node[right] {$v_7=v_{11}$};
				\draw[fill=black] (-3,-3) circle  (2pt) node[below left] {$v_{13}=v_{17}$};
				\draw[fill=black] (3,-3) circle  (2pt) node[below] {$v_{12}$};
				
				\draw[fill=black] (-1,2) circle (2pt) node[right] {$v_2$};
				\draw[fill=black] (-1.5,.5) circle (2pt) node[right] {$v_3$};
				\draw[fill=black] (-3,1) circle (2pt) node[below left] {$v_4$};

				\draw[fill=black] (3,1) circle (2pt) node[above] {$v_8$};
				\draw[fill=black] (2,.5) circle (2pt) node[left] {$v_9$};
				\draw[fill=black] (3,-1) circle (2pt) node[below] {$v_{10}$};
				
				\draw[fill=black] (-3,-1) circle (2pt) node[above left] {$v_{14}$};
				\draw[fill=black] (-1,-0.5) circle (2pt) node[right] {$v_{15}$};
				\draw[fill=black] (0,-2) circle (2pt) node[below right] {$v_{16}$};
				
				\draw (-6,0) -- (-3,3) -- (3,3) -- (6,0) -- (3,-3) -- (-3,-3) -- (-6,0);
				
				\draw (-3,3) -- (-1,2) -- (-1.5,.5) -- (-3,1) -- (-3,3);
				
				\draw (6,0) -- (3,1) -- (2,.5) -- (3,-1) -- (6,0);
				
				\draw (-3,-3) -- (-3,-1) -- (-1,-0.5) -- (0,-2) -- (-3,-3);
			\end{tikzpicture}
			\caption{Circuit $\gamma$} \label{fig:CircuitoGamma}
		\end{center}
	\end{figure}
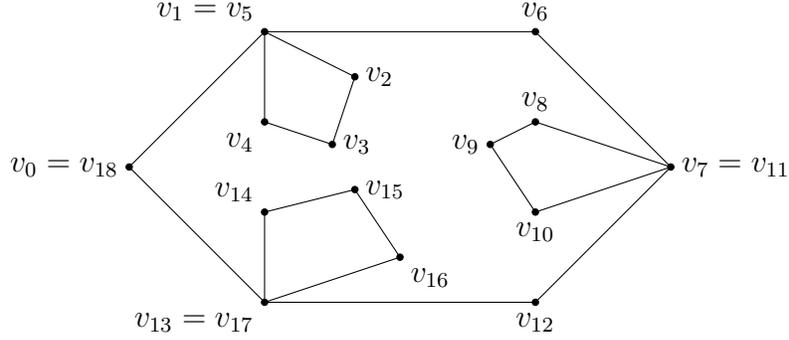

	Consider the circuit $\gamma=v_0v_1\dots v_{18}$ with $v_1=v_5$, $v_7=v_{11}$, $v_{13}=v_{17}$
and $v_0=v_{18}$ (see Figure \ref{fig:CircuitoGamma}).
The internal vertices of $\gamma$ are $v_1,v_7$ and $v_{13}$.
The sub-circuits of $\gamma$ are $v_1v_2v_3v_4v_5$, $v_7v_8v_9v_{10}v_{11}$ and $v_{13}v_{14}v_{15}v_{16}v_{17}$.
These three sub-circuits are disjoint thus $\gamma$ is not a PNC by Observation \ref{obs:NestedSubcircuits}.

\begin{figure}
	\begin{center}
		\begin{tikzpicture}
		\coordinate (g) at (0,4);
		
		\node at (0,4) {$\gamma$};
		\node at (-2,2) {$\gamma_1$};
		\node at (0,2) {$\gamma_2$};
		\node at (2,2) {$\gamma_3$};	
		\node at (-2,0) {$\gamma_4$};
		\node at (0,0) {$\gamma_5$};
		\node at (2,0) {$\gamma_6$};
		\node at (0,-2) {$\gamma_7$};
		\node at (-2,-2) {$\gamma_8$};
		\node at (1,-2)  {$\gamma_9$};
		\node at (2,-2)  {$\gamma_{10}$};
		
		\draw[- angle 90] (-.3,3.7) -- (-2,2.3);
		\draw[- angle 90] (0,3.7) -- (0,2.3);
		\draw[- angle 90] (.3,3.7) -- (2,2.3);
		
		\draw[- angle 90] (-2,1.7) -- (-.3,.3);
		\draw[- angle 90] (-1.7,1.7) -- (1.7,.3);
		
		\draw[- angle 90] (-.3,1.7) -- (-2,.3);
		\draw[- angle 90] (.3,1.7) -- (2,.3);
		
		\draw[- angle 90] (1.7,1.7) -- (-1.7,.3);
		\draw[- angle 90] (2,1.7) -- (0.3,.3);
		
		\draw[- angle 90] (-1.7,-.3) -- (-.3,-1.7);
		\draw[- angle 90] (0,-.3) -- (0,-1.7);
		\draw[- angle 90] (1.7,-.3) -- (.3,-1.7);
		
		\draw[- angle 90] (-2,-.3) -- (-2,-1.7);
		\draw[- angle 90] (.3,-.3) -- (1,-1.7);
		\draw[- angle 90] (2,-.3) -- (2,-1.7);
		\end{tikzpicture}
		\caption{Diagram of $\rightarrow_C$}\label{fig:DiagramaReducciones}
	\end{center}
\end{figure}
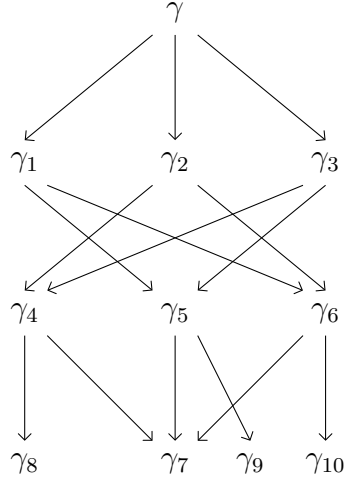

Label the circuits $v_1v_2v_3v_4v_5$, $v_7v_8v_9v_{10}v_{11}$ and $v_{13}v_{14}v_{15}v_{16}v_{17}$
as $\gamma_8$, $\gamma_9$ and $\gamma_{10}$ respectively.
These three sub-circuits are the only external reductions of $\gamma$. The internal reductions of $\gamma$ are:
\begin{align*}
\gamma		&	=v_0\underbrace{v_1v_2v_3v_4v_5}_{\gamma_8}v_6\underbrace{v_7v_8v_9v_{10}v_{11}}_{\gamma_9}
v_{12}\underbrace{v_{13}v_{14}v_{15}v_{16}v_{17}}_{\gamma_{10}}v_{18}\\
\gamma_1	&	=v_0v_1	v_6\underbrace{v_7v_8v_9v_{10}v_{11}}_{\gamma_9}
v_{12}\underbrace{v_{13}v_{14}v_{15}v_{16}v_{17}}_{\gamma_{10}}v_{18}\\
\gamma_2		&	=v_0\underbrace{v_1v_2v_3v_4v_5}_{\gamma_8}v_6v_7
v_{12}\underbrace{v_{13}v_{14}v_{15}v_{16}v_{17}}_{\gamma_{10}}v_{18}\\	
\gamma_3		&	=v_0\underbrace{v_1v_2v_3v_4v_5}_{\gamma_8}v_6\underbrace{v_7v_8v_9v_{10}v_{11}}_{\gamma_9}
v_{12}v_{13}v_{18}\\	
\gamma_4		&	=v_0\underbrace{v_1v_2v_3v_4v_5}_{\gamma_8}v_6v_7
v_{12}v_{13}v_{18}\\
\gamma_5		&	=v_0v_1v_6\underbrace{v_7v_8v_9v_{10}v_{11}}_{\gamma_9}
v_{12}v_{13}v_{18}\\
\gamma_6	&	=v_0v_1	v_6v_7
v_{12}\underbrace{v_{13}v_{14}v_{15}v_{16}v_{17}}_{\gamma_{10}}v_{18}\\	
\gamma_7	&	=v_0v_1	v_6v_7
v_{12}v_{13}v_{18}
\end{align*}

Figure \ref{fig:DiagramaReducciones} shows a diagram of the relation $\rightarrow_C$.
Since $\leq_\gamma$ is the inverse relation of $\stackrel{*}{\rightarrow}_C$
it is clear that $\gamma$ has three immediate predecessors.
\end{ejemplo}

\subsection{A characterization of PNC}

In this section we characterize PNC as chains of cycles.

{
	
	\begin{definicion}
		Suppose $C_0,C_1,\dots, C_m$ are cycles:
		\[
		C_j:= v^j_0v^j_1\dots v^j_{n_j}
		\]
		with two different distinguished vertices $u_j$ and $v_j$
		for each $0\leq j\leq m$ such that $u_0\neq v^0_0$.
		
		We denote as
		\[
		C_0C_1\dots C_m
		\]
		the graph built by the identification of $u_j$ with $v_{j+1}$ for $0\leq j<m$
		
		Such a graph is a {\em chain of cycles} (or just a {\em chain} when there is no risk of confusion).
		We call each cycle $C_j$ with $0\leq j\leq m$ a {\em link}.
	\end{definicion}

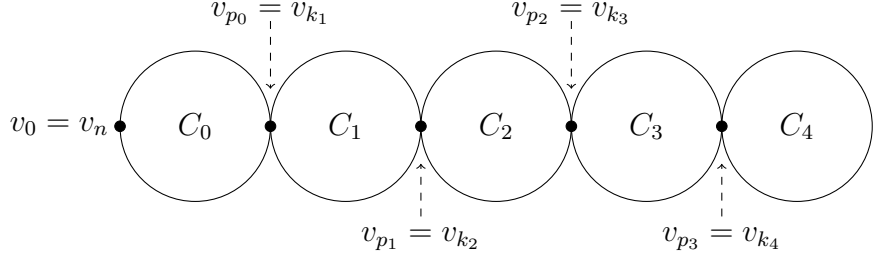
\begin{figure}
	\begin{center}
		\begin{tikzpicture}
		\draw (0,0) circle (1cm);
		\draw (2,0) circle (1cm);
		\draw (4,0) circle (1cm);
		\draw (6,0) circle (1cm);
		\draw (8,0) circle (1cm);
		
		
		\draw[fill=black] (-1,0) circle	(.07cm) node[left] {$v_0=v_n$};
		\draw[fill=black] (1,0) circle	(.07cm);
		\draw[fill=black] (3,0) circle	(.07cm);
		\draw[fill=black] (5,0) circle	(.07cm);
		\draw[fill=black] (7,0) circle	(.07cm);
		
		
		\node at (0,0) {$C_0$};
		\node at (2,0) {$C_1$};
		\node at (4,0) {$C_2$};
		\node at (6,0) {$C_3$};
		\node at (8,0) {$C_4$};

		
		\node at (1,1.5) {$v_{p_0}=v_{k_1}$};
		\draw[dashed, ->] (1,1.4) -- (1,.5);
		
		\node at (3,-1.5) {$v_{p_1}=v_{k_2}$};
		\draw[dashed, ->] (3,-1.2) -- (3,-.5);
		
		\node at (5,1.5) {$v_{p_2}=v_{k_3}$};
		\draw[dashed, ->] (5,1.4) -- (5,.5);
		
		\node at (7,-1.5) {$v_{p_3}=v_{k_4}$};
		\draw[dashed, ->] (7,-1.2) -- (7,-.5);

		\end{tikzpicture}
		\caption{A chain of 5 cycles} \label{fig:CadenaDeCiclos}
	\end{center}
\end{figure}	
	
	\begin{lema}\label{le:ChainsOfCyclesArePNCs}
		If $C_0,C_1,\dots, C_m$ are cycles
		\[
		C_j:= v^j_0v^j_1\dots v^j_{n_j}
		\]
		with two different distinguished vertices $u_j$ and $v_j$
		for each $0\leq j\leq m$ such that $u_0\neq v^0_0$
		then the chain $C_0C_1\dots C_m$ is a PNC
		whose internal vertices are $v^1_{k_1},v^2_{k_2},\ldots, v^m_{k_m}$.
	\end{lema}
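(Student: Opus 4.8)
The plan is to build the internal sequence of the chain explicitly from the cumulative lengths of the links and then verify the four defining properties of a PNC (as isolated in the proof of Theorem~\ref{teo:ReductionsPreservePncProperty}). First I would fix a global labelling of the chain $\gamma = C_0C_1\dots C_m$ as a closed walk $v_0v_1\dots v_n$: start at $v^0_0$, traverse $C_0$ from $v^0_0$ around to the identification vertex $u_0$, which is glued to $v_1$ in $C_1$; continue through $C_1$ from that vertex to $u_1$; and so on, finally closing up through $C_m$ back to $v^0_0$. Writing $n_j$ for the length of $C_j$, this gives indices $p_0 < p_1 < \dots < p_{m-1}$ at which the successive identification vertices $u_0,\dots,u_{m-1}$ occur (here $p_j = \sum_{\ell\le j} n_\ell$ up to the bookkeeping of where within each link the distinguished vertices sit), and by construction $v_{p_j}$ is revisited once more later in the walk when $C_{j+1},\dots,C_m$ have been traversed and we re-enter the part of the walk that returns along the already-visited links. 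I would then set $k_{j}$ to be the first index where $u_{j-1}$ appears and $k'_{j}$ the second, for $1\le j\le m$, and check that $0=k_0<k_1<\dots<k_m<k'_m<\dots<k'_1<k'_0=n$: the nesting $k_1<k_2<\dots$ on the ``way in'' and the reversed order $k'_m<\dots<k'_1$ on the ``way out'' is exactly the structure forced by a linear chain, since once we pass into link $C_{j}$ we must exit it before exiting $C_{j-1}$.

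With the sequence in hand, the verification of the PNC axioms is mostly bookkeeping. Property~2, $\varphi(k_j,k'_j)=v_{k_j}$, is immediate from the construction since both indices were defined as the two occurrences of the same identification vertex $u_{j-1}$. Property~3, $j\neq\ell \implies v_{k_j}\neq v_{k_\ell}$, follows because distinct links are glued only at the chain vertices and each $v^j_0v^j_1\dots v^j_{n_j}$ is a \emph{cycle} (vertices distinct except endpoints), so the identification vertices $u_0,\dots,u_{m-1}$ are pairwise distinct — this uses the hypothesis $u_j\neq v_j$ (so each gluing vertex is genuinely ``new'' relative to the previous link's distinguished endpoint) and the fact that in a chain only consecutive links share a vertex. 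The remaining content is Property~1 together with the claim that the only proper sub-circuits are the $[k_j,k'_j]$: I would argue that a vertex $v_i$ is internal iff it is visited twice, and a vertex of $\gamma$ is visited twice iff it is one of the gluing vertices $u_0,\dots,u_{m-1}$ — no other coincidences arise precisely because each link is a simple cycle and only adjacent links intersect, and they intersect in a single vertex. Hence $\varphi(I_\gamma)=\{v_{k_1},\dots,v_{k_m}\}$, and the sub-circuit between the two occurrences of $u_{j-1}$ is $[k_j,k'_j]$, so these are the only proper sub-circuits.

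The one genuinely non-trivial point — and the step I expect to be the main obstacle — is verifying rigorously that \emph{no vertex is visited more than twice and no coincidences occur other than the intended gluings}, i.e. that $\gamma$ really is a circuit (edges possibly repeated but in the controlled way) with intersection set exactly $\{(k_j,k'_j)\}$. This needs a careful induction on $m$: assuming $C_0\dots C_{m-1}$ is a PNC with the asserted internal vertices, one attaches $C_m$ by identifying $u_{m-1}$ with $v_m\in C_m$ and checks that the new link, being a simple cycle sharing only the single vertex $u_{m-1}$ with the rest of the chain, contributes exactly one new intersection $(k_m,k'_m)$ nested inside $(k_{m-1},k'_{m-1})$ (using Observation~\ref{obs:NestedSubcircuits} in reverse) and no others. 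A clean way to organize this is to observe that $C_0C_1\dots C_{m-1}$ is an external reduction of $\gamma$ at $k_1$ up to the relabelling, so the inductive structure matches the reduction machinery of Section~\ref{sec:Reducciones} and Theorem~\ref{teo:ReductionsPreservePncProperty}; alternatively, peel off $C_0$ instead and induct on the inner chain $C_1\dots C_m$. Either way, once the ``exactly twice, exactly the gluing vertices'' claim is nailed down, assembling the internal sequence and the four axioms is routine, and the asserted internal vertices $v^1_{k_1},\dots,v^m_{k_m}$ are read off directly from the construction.
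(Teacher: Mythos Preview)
Your proposal is correct and ultimately converges on the same inductive argument the paper uses: induct on $m$, attach the innermost link $C_m$ to the chain $C_0\dots C_{m-1}$, and use that $C_m$ is a simple cycle meeting the rest in the single vertex $u_{m-1}=v^m_{k_m}$ to conclude exactly one new intersection is created. The paper's proof is considerably terser---it does not spell out the global index bookkeeping or walk through the four PNC axioms as you do, and in the inductive step it simply asserts that $v^m_{k_m}$ is the only added repeated vertex and that this suffices---so your more explicit treatment of the internal sequence $k_1<\dots<k_m<k'_m<\dots<k'_1$ and of why no unintended coincidences occur is, if anything, a more complete version of the same argument.
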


\begin{proof}
	The proof proceeds by induction in the number of cycles.
	
	Our base case is $m=1$. We have the circuits
	\begin{align*}
	C_0&:=v^0_0v^0_1\dots v^0_{n_0}\\
	C_1&:=v^1_0v^1_1\dots v^1_{n_1}
	\end{align*}
	With $u_0=v^0_p$ and $v_1=v^1_k$ for some $0< p<n_0$ and some $0\leq k<n_1$.
	If we identify $v^0_p$ with $v^1_k$ the resulting graph $C_0C_1$ consists of:
	\begin{enumerate}\denselist
		\item A path within $C_0$ joining $v_0^0$ with $v^0_{p_0-1}$,
		\item an edge joining $v^0_{p_0-1}$ and $v^1_{k_1}$ (which now replaces $v^0_{p_0}$),
		\item the cycle $C_1$ joining $v^1_{k_1}$ with itself,
		\item an edge joining $v^1_{k_1}$ and $v^0_{p_0+1}$, and
		\item a path within $C_0$ joining $v^0_{p_0+1}$ and $v^0_0$.
		
	\end{enumerate}
	This graph is a closed walk. Moreover, since $C_0$ and $C_1$ are cycles,
	there are no repeated edges and
	the only repeated vertex is $v^1_{k_1}$ thus $C_0C_1$ is a PNC
	by Observation \ref{obs:PNCWithSingleInternalVertex}.

	Now suppose	we have cycles $C_0,C_1,\ldots, C_m$
	\[
	C_j:= v^j_0v^j_1\dots v^j_{n_j}
	\]
	with two different distinguished vertices $u_j$ and $v_j$
	for each $0\leq j\leq m$ such that $u_0\neq v^0_0$.
	In each cycle $C_j$ we have that $u_j=v^j_{p_j}$ and $v_j=v^j_{k_j}$ for some
	indexes $p_j\neq k_j$ from $\set{0,1,\dots, n_j}$.
	By inductive hypothesis, the graph $C_0C_1\dots C_{m-1}$
	is a PNC with internal vertices $v^1_{k_1},v^2_{k_2},\dots, v^{m-1}_{k_{m-1}}$.
	
	If we join the cycle $C_m$ with $C_0C_1\dots C_{m-1}$ by identification of $v^{m-1}_{p_{m-1}}$ with
	$v^m_{k_m}$ then
	we obtain a graph \(C_0C_1\dots C_{m-1}C_m\). Since we are adding a closed walk, this new graph
	is a closed walk too. Moreover, this graph is a circuit because $C_m$ is a cycle and
	also by the very same reason $v^m_{k_m}$ is the only added repeated vertex.
	Hence \(C_0C_1\dots C_{m-1}C_m\) is a PNC with internal vertices
	$v^1_{k_1},v^2_{k_2},\ldots, v^m_{k_m}$.	
\end{proof}

{
	\begin{lema}\label{le:PNCsAreChainsOfCycles}
		If $\gamma$ is a PNC with internal vertices $v_{k_1},v_{k_2},\dots v_{k_m}$ then
		there are cycles $C_0,C_1,\dots, C_m$ each of them with a pair of distinguished different vertices
		$u_j,v_j$, such that $\gamma=C_0C_1\dots C_m$. These cycles are unique up to isomorphisms.
	\end{lema}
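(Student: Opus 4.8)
The plan is to prove existence by exhibiting the links explicitly as reductions of $\gamma$, and then to obtain uniqueness by induction on the number $m$ of internal vertices, stripping off the innermost link at each step. If $\gamma$ is a simple cycle ($m=0$) everything is trivial: take $C_0=\gamma$, let $v^0_0=v_0$ and let $u_0$ be any other vertex of $C_0$ (a cycle has at least three vertices); conversely a cycle admits only the one-link decomposition. So assume $\gamma=v_0v_1\dots v_n$ is a non-trivial PNC with internal sequence $0=k_0<k_1<\dots<k_m<k'_m<\dots<k'_1<k'_0=n$; recall that then the proper sub-circuits of $\gamma$ are exactly the $[k_j,k'_j]$, which are nested (Observation \ref{obs:NestedSubcircuits}), and its internal vertices are $v_{k_1},\dots,v_{k_m}$, with $v_{k_j}=v_{k'_j}$ and $v_{k_i}\neq v_{k_j}$ for $i\neq j$.

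For existence, define for $0\le j<m$ the graph $C_j$ as the internal reduction at $k_{j+1}$ of the sub-circuit $[k_j,k'_j]$ (itself an external reduction of $\gamma$); thus $C_j$ is $[k_j,k'_j]$ with its nested sub-circuit $[k_{j+1},k'_{j+1}]$ collapsed onto the single vertex $v_{k_{j+1}}$, and we set $C_m:=[k_m,k'_m]$. Each $C_j$ lies in $X_\gamma$, hence is a circuit and, by Theorem \ref{teo:ReductionsPreservePncProperty}, a PNC. By Observation \ref{obs:CompactionsAndSetDifferences} the step from $[k_j,k'_j]$ to $C_j$ deletes exactly the vertices $v_t$ with $k_{j+1}<t<k'_{j+1}$, which comprise every intersection vertex of $[k_j,k'_j]$ except $v_{k_{j+1}}$, while $v_{k_{j+1}}$ itself ceases to be repeated once the index $k'_{j+1}$ is suppressed; so $C_j$ has no internal vertex, i.e.\ it is a simple cycle (degeneracy is impossible since $C_j\in X_\gamma$ is a circuit, hence has no repeated edge). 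Now take $v_j:=v_{k_j}$ for $1\le j\le m$, $u_j:=v_{k_{j+1}}$ for $0\le j<m$, together with $v^0_0:=v_0$ and $u_m:=$ a neighbour of $v_{k_m}$ in $C_m$. These are admissible distinguished vertices: $u_j\neq v_j$ for $1\le j<m$ by distinctness of internal vertices, $u_m\neq v_m$ because adjacent vertices differ, and $u_0=v_{k_1}\neq v_0=v^0_0$ because $v_0$ is not an internal vertex (otherwise $[0,k_i]$ would be a proper sub-circuit not of the form $[k_\ell,k'_\ell]$). Finally one checks $C_0C_1\dots C_m=\gamma$: the identification defining the chain merges $u_j=v_{k_{j+1}}$ with $v_{j+1}=v_{k_{j+1}}$, already equal in $\gamma$, so no new vertex collisions appear; and on edges the index set $\{0,\dots,n-1\}$ splits as $[0,k_m)\cup[k_m,k'_m)\cup[k'_m,n)$, which matches, link by link, the ``outer arcs'' $[k_j,k_{j+1})$, the cycle $C_m$, and the ``inner arcs'' $[k'_{j+1},k'_j)$ together with the collapsed junction edges $v_{k'_{j+1}}v_{k'_{j+1}+1}$ of the $C_j$; hence $E(\gamma)=\bigcup_{j=0}^m E(C_j)$ and the chain coincides with $\gamma$.

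For uniqueness I argue by induction on $m$, the case $m=0$ being settled above. Let $\gamma=C'_0C'_1\dots C'_{m'}$ be an arbitrary chain decomposition. By Lemma \ref{le:ChainsOfCyclesArePNCs} this chain has exactly $m'$ internal vertices, so $m'=m$, and the internal vertices of $\gamma$ are precisely the gluing vertices $w_j$ ($1\le j\le m$) between $C'_{j-1}$ and $C'_j$. The sub-circuit of $\gamma$ hanging from $w_j$ contains all of $C'_j,\dots,C'_m$, in particular the one hanging from $w_{j+1}$, so $w_{j+1}\prec w_j$; comparing with the internal sequence forces $w_j=v_{k_j}$ for every $j$. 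Hence $C'_m$ is the link carrying only the innermost vertex $v_{k_m}$, and since nothing is attached beyond it a traversal of $\gamma$ enters $C'_m$ exactly once, at $v_{k_m}$, so $C'_m$ equals the sub-circuit $[k_m,k'_m]=C_m$. Collapsing this innermost link exhibits $C'_0\dots C'_{m-1}$ as the internal reduction of $\gamma$ at $k_m$, a PNC with $m-1$ internal vertices, to which the inductive hypothesis applies; together with $C'_m=C_m$ this pins down all the $C'_j$ up to isomorphism.

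The main obstacle is the bookkeeping in the existence step: correctly matching the nested index intervals $[k_j,k_{j+1})$ and $[k'_{j+1},k'_j)$ with the arcs and the ``collapsed'' junction edges of each $C_j$, and being sure that no link degenerates (which is automatic because $\gamma$, being a circuit, has no repeated edge and its only proper sub-circuits are the $[k_j,k'_j]$). A secondary point requiring care is making rigorous, in the uniqueness step, the identification $w_j=v_{k_j}$ of the gluing vertices via the order $\prec$ and the nesting of the sub-circuits.
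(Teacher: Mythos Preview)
Your argument is correct. The approach differs from the paper's mainly in the existence half: the paper proves existence and uniqueness together by a single induction on $m$, at each step peeling off the innermost link---it takes $C_m$ to be the external reduction at $v_{k_m}$, applies the inductive hypothesis to the internal reduction $\delta=C_0C_1\dots C_{m-1}$, and reassembles. You instead exhibit all links $C_0,\dots,C_m$ simultaneously as explicit elements of $X_\gamma$ (each $C_j$ being the internal reduction at $k_{j+1}$ of the sub-circuit $[k_j,k'_j]$) and then verify the edge partition directly. Your route costs more bookkeeping but makes the structure of the chain completely transparent and avoids the recursive unwinding. For uniqueness you and the paper both induct on $m$ by stripping off the innermost link; your version is actually more careful, since you first pin down the gluing vertices via the total order $\prec$ (forcing $w_j=v_{k_j}$ and hence $C'_m=[k_m,k'_m]$ on the nose), whereas the paper's uniqueness paragraph only argues that $|C'_m|$ is determined and leaves implicit why an arbitrary decomposition must have its first $m$ links form the internal reduction at $k_m$.
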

}
\begin{proof}
	We prove this by induction on $m$.
	
	The base case is $m=1$. Consider a PNC $\gamma$ with only one internal vertex $v_{k_1}$
	and internal sequence $k_1,k'_1$.
	At $k_1$ this circuit has internal reduction $C_0=v_0v_1\dots v_{k_1}v_{k'_1+1}\dots v_n$
	and external reduction $C_1=v_{k_1}v_{k_1+1}\dots v_{k'_1}$. Both are cycles since $\gamma$
	has no other internal vertex and $\gamma=C_0C_1$ because they only have vertex $v_{k_1}$ in common.
	The uniqueness in this case is immediate since two cycles are isomorphic iff they have
	the same cardinality.
	
	Now suppose $\gamma$ is a PNC with internal vertices $v_{k_1},v_{k_2},\dots,v_{k_m}$.
	By Theorem \ref{teo:ReductionsPreservePncProperty} the internal reduction $\delta$ of $\gamma$ at $v_{k_m}$ is a PNC
	with $m-1$ internal vertices $v_{k_1},v_{k_2},\dots,v_{k_{m-1}}$ hence by inductive hypothesis
	there are cycles $C_0,C_1,\dots, C_{m-1}$, unique up to isomorphism, such that
	$\delta=C_0C_1\dots C_{m-1}$.
	On the other hand the external reduction of $\gamma$ at $v_{k_m}$
	is the cycle $v_{k_m}v_{k_m}+1\dots v_{k_m'}$ if we denote this cycle by $C_m$
	it is clear that $\gamma=C_0C_1\dots C_{m-1}C_m$.
	
	These cycles are unique because for any other cycle $C'_{m}$ with $C_m\ncong C_m'$,
	we have that $|\gamma|\neq |C_0C_1\dots C_{m-1}C'_m|$.

\end{proof}

Lemmas \ref{le:ChainsOfCyclesArePNCs} and \ref{le:PNCsAreChainsOfCycles} prove
the following:
{
\begin{teorema}\label{teo:APncIsAChain}
A circuit $\gamma$ is perfectly nested if and only if it is a chain
of cycles.
\end{teorema}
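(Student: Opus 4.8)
The plan is to read the biconditional off directly from Lemmas \ref{le:ChainsOfCyclesArePNCs} and \ref{le:PNCsAreChainsOfCycles}, which between them already supply the two implications; the only real work left is to check that the two statements dovetail and that the degenerate case of a simple cycle is handled.

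For the direction ``chain of cycles $\Rightarrow$ PNC'', I would argue as follows. Suppose $\gamma=C_0C_1\dots C_m$ is a chain of cycles. If $m=0$ then $\gamma=C_0$ is a single cycle, hence a simple cycle, and therefore a PNC by the very definition of PNC. If $m\geq 1$, Lemma \ref{le:ChainsOfCyclesArePNCs} applies verbatim and gives that $\gamma$ is a PNC, with internal vertices $v^1_{k_1},\dots,v^m_{k_m}$. So in every case a chain of cycles is a PNC.

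For the converse, suppose $\gamma$ is a PNC. If $\gamma$ is a simple cycle, it is trivially a chain of cycles: take the chain consisting of the single link $C_0:=\gamma$ with $m=0$, choosing any two distinct vertices of $\gamma$ (with the first distinct from $v^0_0$) as the distinguished pair so as to satisfy the letter of the definition. Otherwise $\gamma$ is a non-trivial PNC, so its set of internal vertices is nonempty, say $v_{k_1},\dots,v_{k_m}$ with $m\geq 1$; Lemma \ref{le:PNCsAreChainsOfCycles} then produces cycles $C_0,C_1,\dots,C_m$, each equipped with a pair of distinct distinguished vertices, such that $\gamma=C_0C_1\dots C_m$, which is precisely the assertion that $\gamma$ is a chain of cycles.

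I do not expect any serious obstacle: the substantive content was already established in the two lemmas (the inductive construction of a chain from a PNC by peeling off the innermost cycle, and of a PNC from a chain by appending a cycle), so this theorem is essentially a bookkeeping corollary. The one point that needs a moment's care is the boundary case $m=0$: Lemma \ref{le:ChainsOfCyclesArePNCs} is phrased with base case $m=1$, and the definition of ``chain'' formally demands distinguished vertices even when no identification is performed, so one should explicitly note that a lone cycle is simultaneously a simple cycle (hence a PNC) and a chain, thereby closing the correspondence at that end.
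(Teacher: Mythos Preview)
Your proposal is correct and matches the paper's approach exactly: the paper simply states that Lemmas~\ref{le:ChainsOfCyclesArePNCs} and~\ref{le:PNCsAreChainsOfCycles} together prove the theorem, with no further argument. Your explicit treatment of the degenerate case $m=0$ (a single cycle) is a genuine improvement over the paper's terse citation, since both lemmas are stated and proved only for $m\geq 1$.
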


Theorem \ref{teo:APncIsAChain} shows that each PNC is a chain of cycles like the one presented
on Figure \ref{fig:CadenaDeCiclos}.

\begin{observacion}\label{obs:ReductionsOnChains}
	If $\gamma$ is a PNC $C_0C_1\dots C_m$ with internal vertices
	\[v_{k_1},v_{k_2},\dots,v_{k_m}\]
	where $v_{k_j}$ joins $C_{j-1}$ with $C_j$,
	then the internal reduction at $k_j$ is $C_0C_1\dots C_{j-1}$
	and the external reduction at $k_j$ is $C_jC_{j+1}\dots C_m$.
\end{observacion}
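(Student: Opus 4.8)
The plan is to induct on the number $m$ of internal vertices of $\gamma$, leaning on the explicit recursive construction carried out in the proof of Lemma~\ref{le:PNCsAreChainsOfCycles}. Recall that that construction peels off the innermost link first: $C_m$ is the external reduction of $\gamma$ at $v_{k_m}$, the graph $\delta:=C_0C_1\cdots C_{m-1}$ is the internal reduction of $\gamma$ at $v_{k_m}$ (and, by Theorem~\ref{teo:ReductionsPreservePncProperty}, a PNC with internal vertices $v_{k_1},\dots,v_{k_{m-1}}$), and one then recurses on $\delta$. This already settles the case $j=m$ of the statement, and the base case $m=1$ is exactly Observation~\ref{obs:PNCWithSingleInternalVertex} together with the identification of $C_0,C_1$ made in Lemma~\ref{le:PNCsAreChainsOfCycles}.

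For the inductive step, fix $1\le j\le m-1$ and apply the inductive hypothesis to $\delta=C_0\cdots C_{m-1}$, which has $m-1$ internal vertices. For the \emph{internal} reduction: since $k_j<k_m<k_m'<k_j'$, the interior of the sub-circuit $[k_m,k_m']$ is contained, as a vertex set, in the interior of $[k_j,k_j']$, so deleting the interior of $[k_j,k_j']$ from $\gamma$ yields the same graph as first passing to $\delta=\gamma-\set{v_{k_m+1},\dots,v_{k_m'-1}}$ and then deleting the interior of $[k_j,k_j']$ inside $\delta$; in other words, using Observation~\ref{obs:CompactionsAndSetDifferences}, the internal reduction of $\gamma$ at $k_j$ equals the internal reduction of $\delta$ at $k_j$, which is $C_0\cdots C_{j-1}$ by hypothesis. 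For the \emph{external} reduction: write $\eta:=[k_j,k_j']$, which lies in $X_\gamma$ and is hence a PNC, with innermost internal vertex $v_{k_m}$. Its external reduction at $k_m$ is again the sub-circuit $[k_m,k_m']=C_m$, while its internal reduction at $k_m$ is $\eta$ with the interior of $[k_m,k_m']$ removed; this last graph is precisely the external reduction of $\delta$ at $k_j$, hence equals $C_j\cdots C_{m-1}$ by the inductive hypothesis. Therefore $\eta$ is obtained by adjoining the cycle $C_m$ to the chain $C_j\cdots C_{m-1}$ at the vertex $v_{k_m}$, and by the very construction of a chain of cycles used in Lemma~\ref{le:ChainsOfCyclesArePNCs} this graph is $C_jC_{j+1}\cdots C_m$, as claimed.

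The main obstacle I expect is purely bookkeeping rather than conceptual: under a reduction a vertex $v_{k_i}$ keeps its ``first'' index $k_i$ but its ``second'' index $k_i'$ shifts, so the identities ``the internal reduction of $\gamma$ at $k_j$ equals the internal reduction of $\delta$ at $k_j$'' and ``the internal reduction of $\eta$ at $k_m$ equals the external reduction of $\delta$ at $k_j$'' must be verified at the level of the underlying vertex and edge sets, not by matching index ranges; Observation~\ref{obs:CompactionsAndSetDifferences}, which expresses each reduction as an explicit vertex deletion, is exactly what makes this routine. Once these set-level equalities are in place, the remainder is a direct unwinding of the definitions of chain and of internal/external reduction, so no new idea beyond Theorem~\ref{teo:APncIsAChain} and the constructions already performed is needed.
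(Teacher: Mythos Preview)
Your argument is correct. The paper, however, offers no proof at all: the statement is labelled an \emph{Observation} and is simply asserted, the implicit justification being that once Theorem~\ref{teo:APncIsAChain} identifies $\gamma$ with the chain $C_0C_1\cdots C_m$ and the internal vertex $v_{k_j}$ with the unique vertex shared by $C_{j-1}$ and $C_j$, the description of the two reductions is read off directly from Observation~\ref{obs:CompactionsAndSetDifferences}.

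Your inductive proof is therefore not a different route so much as a fully worked-out version of what the paper leaves to the reader. The induction on $m$ via the recursive peeling in Lemma~\ref{le:PNCsAreChainsOfCycles} is a natural way to make this rigorous, and you correctly identify the only genuine subtlety, namely that after a reduction the second index $k_j'$ shifts, so the equalities between reductions must be checked at the level of vertex sets rather than index ranges. That caveat is well placed and your handling of it via Observation~\ref{obs:CompactionsAndSetDifferences} is exactly right. If anything, the argument could be shortened: once one observes that the vertex set of the sub-circuit $[k_j,k_j']$ is $V(C_j)\cup\cdots\cup V(C_m)$ and that its complement in $\gamma$ (together with $v_{k_j}$) is $V(C_0)\cup\cdots\cup V(C_{j-1})$, both claims follow in one line without the inductive scaffolding.
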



\begin{lema}\label{le:CaracterizacionFamiliaDeReducciones}
	Let $\gamma=C_0C_1\dots C_m$ be a PNC.
	A circuit $\delta$ is an element of $X_\gamma$ iff there are $j,\ell$ with $0\leq j<j+\ell\leq m$ such that $\delta=C_jC_{j+1}\dots C_{j+\ell}$ .
\end{lema}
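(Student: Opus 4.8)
The statement to prove is that for a PNC $\gamma=C_0C_1\dots C_m$, a circuit $\delta$ lies in $X_\gamma$ if and only if $\delta=C_jC_{j+1}\dots C_{j+\ell}$ for some $0\le j<j+\ell\le m$ (here I read the condition as picking a contiguous block of links). The plan is to prove both implications by induction on $m$, using Observations \ref{obs:PredecesoresEnElOrdenDeCircuitos} and \ref{obs:ReductionsOnChains} together with Lemma \ref{le:XGammaEsTransitivo} and Theorem \ref{teo:ReductionsPreservePncProperty}.

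**The ``if'' direction.** First I would show that every contiguous sub-chain $C_jC_{j+1}\dots C_{j+\ell}$ belongs to $X_\gamma$. By Observation \ref{obs:ReductionsOnChains}, one internal reduction of $\gamma$ at its innermost internal vertex $v_{k_m}$ gives $C_0C_1\dots C_{m-1}$, and the external reduction at the outermost internal vertex $v_{k_1}$ gives $C_1C_2\dots C_m$; iterating, a reduction at $k_j$ produces either the prefix $C_0\dots C_{j-1}$ or the suffix $C_jC_{j+1}\dots C_m$. So from $\gamma$ I can first perform external reductions to chop off an initial segment and reach $C_jC_{j+1}\dots C_m$, then (since this is itself a PNC by Theorem \ref{teo:ReductionsPreservePncProperty}, with links $C_j,\dots,C_m$) perform internal reductions to chop off a final segment and reach $C_jC_{j+1}\dots C_{j+\ell}$. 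Chaining these $\rightarrow_C$ steps gives $\gamma\stackrel{*}{\rightarrow}_C C_jC_{j+1}\dots C_{j+\ell}$, i.e.\ the sub-chain is in $X_\gamma$. (Formally this is an induction on the number of links removed, but the inductive step is exactly one application of Observation \ref{obs:ReductionsOnChains} plus Lemma \ref{le:XGammaEsTransitivo}.)

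**The ``only if'' direction.** Conversely, suppose $\delta\in X_\gamma$. I induct on $m$. If $\gamma$ is a cycle ($m=0$), then $\delta=\gamma=C_0$ and we are done. Otherwise take a chain $\gamma=\delta_0\rightarrow_C\delta_1\rightarrow_C\dots\rightarrow_C\delta_k=\delta$. If $k=0$ then $\delta=\gamma$ is the full chain. If $k\ge 1$, look at the first step $\gamma\rightarrow_C\delta_1$: by Theorem \ref{teo:ReductionsPreservePncProperty} $\delta_1$ is a PNC, and by Observation \ref{obs:PredecesoresEnElOrdenDeCircuitos}(2) its only immediate predecessors under $\leq_\gamma$ are the reductions of $\gamma$ at its outermost and innermost internal vertices — which by Observation \ref{obs:ReductionsOnChains} are $C_1C_2\dots C_m$ and $C_0C_1\dots C_{m-1}$ respectively. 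In either case $\delta_1$ is a contiguous sub-chain with at most $m-1$ internal vertices, so by Lemma \ref{le:XGammaEsTransitivo} we have $\delta\in X_{\delta_1}$, and the inductive hypothesis applied to the chain $\delta_1$ tells us $\delta$ is a contiguous block of the links of $\delta_1$, hence of the links of $\gamma$.

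**Main obstacle.** The delicate point is making precise that the links of a sub-chain of $\gamma$ are (up to isomorphism) exactly consecutive links $C_i$ of $\gamma$, so that the inductive hypothesis composes correctly — in other words, that reducing at an extreme vertex of $\delta_1=C_0\dots C_{m-1}$ or $C_1\dots C_m$ removes an extreme link and leaves a block that is still a block of the original $\gamma$. This rests on the uniqueness-up-to-isomorphism of the chain decomposition (Lemma \ref{le:PNCsAreChainsOfCycles}) and on the fact, from Observation \ref{obs:PredecesoresEnElOrdenDeCircuitos}(2), that each non-cyclic $\delta\in X_\gamma$ has \emph{only} the two extreme reductions as immediate predecessors, so the induction cannot ``jump into the middle'' of the chain. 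Once that bookkeeping is set up, both directions are routine applications of the observations already established.
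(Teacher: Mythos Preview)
Your ``if'' direction is fine and matches the paper's argument (the paper does it in at most two $\rightarrow_C$ steps rather than iterating one-link removals, but both work).

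In the ``only if'' direction there is a genuine gap. You write that when $\gamma\rightarrow_C\delta_1$, Observation~\ref{obs:PredecesoresEnElOrdenDeCircuitos}(2) forces $\delta_1$ to be one of the two extreme reductions $C_1\dots C_m$ or $C_0\dots C_{m-1}$. That is not what the observation says. Observation~\ref{obs:PredecesoresEnElOrdenDeCircuitos}(2) identifies the \emph{immediate predecessors of $\gamma$ in the order $\leq_\gamma$}, but a single $\rightarrow_C$ step is not the same thing as a covering relation in $\leq_\gamma$: reducing $\gamma$ at \emph{any} internal vertex $v_{k_j}$ is a legal one-step reduction, and by Observation~\ref{obs:ReductionsOnChains} it produces $C_0\dots C_{j-1}$ or $C_j\dots C_m$, which for $1<j<m$ is \emph{not} an immediate predecessor of $\gamma$. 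So your dichotomy for $\delta_1$ is false as stated.

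The fix is easy and in fact what the paper does: drop the appeal to Observation~\ref{obs:PredecesoresEnElOrdenDeCircuitos}(2) and use Observation~\ref{obs:ReductionsOnChains} directly. It tells you that any one-step reduction of a chain is again a contiguous sub-chain (a prefix or a suffix). The paper then inducts on the length $n$ of the reduction chain $\gamma=\gamma_0\rightarrow_C\dots\rightarrow_C\gamma_n=\delta$: by the inductive hypothesis $\gamma_{n-1}=C_jC_{j+1}\dots C_{j+\ell}$, and one more application of Observation~\ref{obs:ReductionsOnChains} to the step $\gamma_{n-1}\rightarrow_C\delta$ yields that $\delta$ is a prefix or suffix of that block, hence again a contiguous sub-chain of $\gamma$. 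Your induction on $m$ can be repaired the same way: once you know $\delta_1$ is some $C_0\dots C_{j-1}$ or $C_j\dots C_m$ (not just the two extreme ones), it has strictly fewer internal vertices and the inductive hypothesis applies.
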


\begin{proof}
	If $\delta\in X_\gamma$ then $\gamma\stackrel{*}{\rightarrow}_C\delta$ i.e. there are circuits $\gamma_0, \gamma_1,\dots, \gamma_n$ with $n\geq 0$ such that
	\[\gamma=\gamma_0\rightarrow_C\gamma_1\rightarrow_C\dots\rightarrow_C\gamma_n=\delta\]
	We will prove by induction on $n$ that $\delta=C_jC_{j+1}\dots C_{j+\ell}$ for some $j,\ell$.
	
	The base case $n=0$ is trivial since $\delta=\gamma$.
	
	Now Suppose \[\gamma=\gamma_0\rightarrow_C\gamma_1\rightarrow_C\dots\rightarrow_C\gamma_n=\delta\]
	for some $n>0$. Then by inductive hypothesis
	\[\gamma_{n-1}=C_{j}C_{j+1}\dots C_{j+\ell}\]
	for some pair $j,\ell$ with $0\leq j<j+\ell\leq m$. Since $\gamma_{n-1}\rightarrow_C\gamma_n=\delta$
	there is a $p$ with $j<k<j+\ell$ such that either
	\[\delta=C_{p}C_{k+1}\dots C_{j+\ell}\]
	if $\delta$ is the external reduction of $\gamma_{n-1}$ at ${k_p}$
	or
	\[\delta_{n-1}=C_{j}C_{j+1}\dots C_{p-1}\]
	if $\delta$ is the internal reduction of $\gamma_{n-1}$ at ${k_p}$ (see Observation \ref{obs:ReductionsOnChains}).
	
	On the other hand suppose that $\delta=C_{j}C_{j+1}\dots C_{j+\ell}$ and $\gamma\neq\delta$. We have three possible cases:
	\begin{enumerate}
		\item If $0<j<j+\ell<m$ then $\gamma\rightarrow_C\gamma_1\rightarrow_C\delta$ where $\gamma_1$ is the internal reduction of
		$\gamma$ at $k_{j+\ell}$ and $\delta$ is the external reduction of $\gamma_1$ at $k_j$.
		\item If $0=j$ then $\delta$ is the internal reduction of $\gamma$ at $k_\ell$.
		\item If $j+\ell=m$ the $\delta$ is the external reduction of $\gamma$ at $k_\ell$.
	\end{enumerate}
On each case $\gamma\stackrel{*}{\rightarrow}_C\delta$ thus $\delta\in X_\gamma$.
\end{proof}

}


\section{Binary sequences and perfectly nested circuits}\label{sec:SucesionesBinarias&CPA}

 In this section we describe, given some $m\in \mathbb N$, an equivalence relation on the of binary sequences
 less or equal than $m$. Then we give an order to the set of equivalence classes and finally
 we prove our main result namely
 that this ordered set is isomorphic to the family of reductions of a PNC with $m$ internal vertices.

\subsection{Binary sequences}

The length of a sequence $s$ is denoted as $\ell(s)$.
We denote the empty sequence by $\emptyset$.
The set of all the binary sequences of length at most $m$ is denoted by $2^{\leq m}$.
We denote as $s^\frown t$ the {\em concatenation} of sequences $s$ and $t$.
\begin{definicion}
	The pair $(s,t)$ of sequences in $2^{\leq m}$ belong to the relation $\sim_m$ iff:
	\begin{enumerate}
		\item $\ell(s)=\ell(t)$, and
		\item $s$ and $t$ have the same number of 1's (and consequently, the same number of 0's.
	\end{enumerate}
\end{definicion}

We will denote by $|s|_0$ and $|s|_1$ respectively the number of 0's and 1's in $s$.

\begin{observacion}
	Notice that $\sim_m$ is an equivalence relation.
	We denote the quotient set $2^{\leq m}/\sim_m$ as $S_m$.
\end{observacion}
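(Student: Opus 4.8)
The plan is to verify directly that $\sim_m$ satisfies the three defining properties of an equivalence relation on $2^{\leq m}$: reflexivity, symmetry and transitivity. The cleanest way to organize this is to observe that $\sim_m$ is the pullback along a single function of the equality relation. Concretely, I would introduce the map
\[
f : 2^{\leq m} \longrightarrow \mathbb{N}\times\mathbb{N}, \qquad f(s) = (\ell(s),\, |s|_1),
\]
and note that, by the very definition of $\sim_m$, we have $s \sim_m t$ if and only if $f(s) = f(t)$. Since equality on $\mathbb{N}\times\mathbb{N}$ is reflexive, symmetric and transitive, those three properties transfer verbatim to $\sim_m$: $f(s)=f(s)$ gives reflexivity; $f(s)=f(t)$ implies $f(t)=f(s)$ gives symmetry; and $f(s)=f(t)$ together with $f(t)=f(u)$ implies $f(s)=f(u)$ gives transitivity.

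If one prefers an unpackaged argument, the same three lines can be written out without $f$: for any $s$, trivially $\ell(s)=\ell(s)$ and $|s|_1 = |s|_1$, so $s\sim_m s$; if $\ell(s)=\ell(t)$ and $|s|_1=|t|_1$ then the symmetric equalities hold, so $t\sim_m s$; and chaining the equalities of lengths and of counts of $1$'s gives transitivity. I would also add, to justify the parenthetical remark in the definition, that once $\ell(s)=\ell(t)$ and $|s|_1=|t|_1$ hold one automatically has $|s|_0 = \ell(s)-|s|_1 = \ell(t)-|t|_1 = |t|_0$, so condition (2) could equivalently have been phrased in terms of the number of $0$'s.

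I do not expect any real obstacle here: the statement is essentially the observation that the kernel of a function is an equivalence relation, specialized to the invariant $s\mapsto(\ell(s),|s|_1)$. The only point requiring a word of care is that both invariants are genuinely well defined on every element of $2^{\leq m}$, including the empty sequence $\emptyset$ (for which $\ell(\emptyset)=|\emptyset|_1=0$), which is immediate. Consequently the quotient $S_m = 2^{\leq m}/\!\sim_m$ is well defined, and each of its classes is exactly the set of binary sequences of a fixed length at most $m$ having a fixed number of $1$'s.
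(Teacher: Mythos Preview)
Your argument is correct and complete. The paper itself offers no proof at all for this observation---it simply states that $\sim_m$ is an equivalence relation and moves on---so your verification via the kernel of $s\mapsto(\ell(s),|s|_1)$ (or the equivalent direct check) is more than the authors provide.
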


\begin{definicion}
	We define a binary relation $\leq_m$ on $S_m$ as follows:
	$[s]\leq_m[t]$ iff there are sequences $s'\in[s],\, t'\in[t]$
	such that $s'$ extends $t'$.
\end{definicion}

\begin{teorema}
	The relation $\leq_m$ is an order on $S_m$.
\end{teorema}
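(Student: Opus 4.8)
The plan is to verify the three axioms of a partial order --- reflexivity, antisymmetry, and transitivity --- after first recording one auxiliary property of $\sim_m$. That property is that $\sim_m$ is compatible with appending a common suffix: if $p\sim_m q$ and both $p^\frown r$ and $q^\frown r$ lie in $2^{\leq m}$, then $p^\frown r\sim_m q^\frown r$, since concatenating $r$ adds the same amount to the length and to the count of $1$'s on each side. I would also use throughout that all sequences in a fixed class $[s]$ have the same length (clause (1) of the definition of $\sim_m$), so that a class is completely determined by the pair $(|s|_0,|s|_1)$.

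Reflexivity is immediate: $s$ extends itself, hence $[s]\leq_m[s]$. For antisymmetry I would argue by length bookkeeping. Assuming $[s]\leq_m[t]$ and $[t]\leq_m[s]$ and choosing witnessing representatives, a member $s_1\in[s]$ that extends a member $t_1\in[t]$ satisfies $\ell(s)=\ell(s_1)\geq\ell(t_1)=\ell(t)$, and the symmetric inequality forces $\ell(s)=\ell(t)$. Then $s_1$ extends $t_1$ while $\ell(s_1)=\ell(t_1)$, so in fact $s_1=t_1$, and therefore $[s]=[s_1]=[t_1]=[t]$.

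The real content is transitivity, and the one genuine obstacle is that the inequalities $[s]\leq_m[t]$ and $[t]\leq_m[u]$ may be witnessed by two \emph{different} representatives of the middle class $[t]$, so one cannot directly compose the two prefix relations. The plan is to use the auxiliary property to replace one witness by the other. Writing $s_1=t_1^\frown a$ with $s_1\in[s]$, $t_1\in[t]$, and $t_2=u_2^\frown b$ with $t_2\in[t]$, $u_2\in[u]$, the relation $t_1\sim_m t_2$ gives $t_2^\frown a\sim_m t_1^\frown a=s_1$; since $t_2^\frown a$ has the same length as $s_1\in 2^{\leq m}$ it again lies in $2^{\leq m}$, so $t_2^\frown a\in[s]$. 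As $t_2^\frown a=u_2^\frown b^\frown a$ extends $u_2\in[u]$, this is exactly $[s]\leq_m[u]$, and transitivity follows.

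If a slicker route is preferred, one can instead establish the closed description $[s]\leq_m[t]$ iff $|s|_0\geq|t|_0$ and $|s|_1\geq|t|_1$ --- the forward direction is the length/count bookkeeping above, the converse is witnessed by sequences of the form $1^{a}0^{b}$ --- from which all three order axioms follow at once; but I expect the direct argument to be the cleanest, with the suffix-compatibility of $\sim_m$ as its crux.
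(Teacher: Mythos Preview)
Your proof is correct. Reflexivity matches the paper, and your antisymmetry is a minor variant: the paper tracks both $\ell(\cdot)$ and $|\cdot|_1$ through the two hypotheses to conclude $s\sim_m t$ directly, while you equalise only lengths and then observe that a proper prefix of equal length must coincide with the whole string, giving $s_1=t_1$.

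The genuine divergence is in transitivity. The paper passes through the numerical invariants, deriving $|t|_0\leq|r|_0$ and $|t|_1\leq|r|_1$ and then exhibiting explicit canonical witnesses of the form $0^{a}1^{b}$ to realise $[r]\leq_m[t]$. Your argument instead stays purely combinatorial: you use the suffix-compatibility of $\sim_m$ to replace the representative $t_1$ by the other representative $t_2$ inside the extension $t_1^\frown a$, obtaining a single string $u_2^\frown b^\frown a\in[s]$ that extends $u_2\in[u]$. This avoids introducing the canonical $0^{a}1^{b}$ representatives and is arguably cleaner, since it works uniformly without ever unpacking the class into its pair $(|s|_0,|s|_1)$. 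The paper's route, on the other hand, essentially proves along the way the closed-form characterisation you mention at the end, $[s]\leq_m[t]\iff |s|_0\geq|t|_0\text{ and }|s|_1\geq|t|_1$, which is useful in its own right later in the isomorphism argument.
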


\begin{proof}
	Let $[s]$ be an element of $S_m$. Since it extends itself it is clear that $[s]\leq_m[s]$, so
	$\leq_m$ is reflexive.
	
	To check antisymmetry, suppose that $[s],[t]$ are two equivalence classes in $S_m$
	such that $[s]\leq_m[t]$ and $[t]\leq_m[s]$.
	
	Since $[s]\leq_m[t]$
	there are sequences $s'\in[s]$ and $t'\in[t]$
	such that $s'$ extends $t'$ thus
	\[
	\ell(t)=\ell(t')\leq \ell (s')=\ell(s)
	\]
	and
	\[
	|t|_1=|t'|_1\leq |s'|_1=|s|_1
	\]
	And since $[t]\leq_m[s]$
	there are sequences $\tilde s\in[s]$ and $\tilde t\in[t]$
	$\tilde t$ extends $\tilde s$ so
	\[
	\ell(s)=\ell(\tilde s)\leq \ell(\tilde t)=\ell(t)
	\]
	and
	\[
	|s|_1=|\tilde s|_1\leq |\tilde t|_1=|t|_1
	\]
	Hence
	$\ell(s)=\ell(t)$ and $|s|_1=|t|_1$. As a consequence $s\sim_m t$ and therefore $[s]=[t]$.
	
	To prove transitivity suppose that $[r],[s],[t]$ are three classes in $S_m$
	such that $[r]\leq_m[s]$ and $[s]\leq_m[t]$.
	As $[r]\leq_m[s]$ there are sequences $r'\in[r]$ and $s'\in [s]$ with $r'$ extending $s'$
	thus $|s|_0=|s'|_0\leq |r'|_0=|r|_0$ and $|s|_1=|s'|_1\leq |r'|_1=|r|_1$.
	Analogously, since $[s]\leq_m[t]$, there are sequences $\tilde s\in[s]$ and $\tilde t\in[t]$ such that
	$\tilde s$ extends $\tilde t$.
	Hence $|t|_0=|\tilde t|_0\leq |\tilde s|_0=|s|_0$ and $|t|_1=|\tilde t|_1\leq |\tilde s|_1=|s|_1$,
	thus
	\[
	|t|_0\leq |r|_0\quad\text{y}\quad|t|_1\leq |r|_1
	\]
	and since $\ell(t)=|t|_0+|t|_1$ and $\ell(r)=|r|_0+|r|_1$ we have that
	\[
	t'=\underbrace{0,\ldots, 0}_{|t|_0},\underbrace{1,\dots, 1}_{|t|_1}
	\]
	is an element of $[t]$
	and
	\[
	\tilde r=\underbrace{0,\ldots, 0}_{|t|_0},\underbrace{1,\dots, 1}_{|t|_1},
	\underbrace{0,\ldots, 0}_{|r|_0-|t|_0},\underbrace{1,\dots, 1}_{|r|_1-|t|_1}
	\]
	is an element of $[r]$ that extends $t'$.
	Therefore $[r]\leq_m[t]$.
\end{proof}

\subsection{Order isomorphism between perfectly nested circuits and classes of binary sequences}

\begin{definicion}
	Let {$\gamma=C_0C_1\dots C_m$} be a PNC with internal sequence $k_1,k_2,\dots,k_m,k'_m,\dots, k'_1$.
	The {\em $0$-reduction} of $\gamma$ is the internal reduction of $\gamma$ at vertex $v_{k_m}$ i.e.
	the circuit
	\[
	{C_0C_1\dots C_{m-1}}
	\]
	and the {\em $1$-reduction} is the external reduction of $\gamma$ at vertex $v_{k_1}$ i.e.
	\[
	{C_1\dots C_m}
	\]
\end{definicion}

\begin{definicion}
	Suppose $\delta\in X_\gamma$. A {\em 0-1-sequence for $\delta$} is a sequence
	$\gamma_0,\gamma_1,\ldots, \gamma_p$ of elements in $X_\gamma$
	such that $\gamma_0=\gamma,\gamma_p=\delta$ and for each $0\leq i<p$,
	$\gamma_{i+1}$ is obtained from $\gamma_i$ either by a
	$0$-reduction or a $1$-reduction.
	
	If
	\begin{equation}\label{eq:Sucesion01}
	\gamma_0,\gamma_1,\ldots, \gamma_p	
	\end{equation}
	is a 0-1-sequence for $\delta$ where each $\gamma_{i+1}$ is the $0$-reduction of $\gamma_i$, then we say it is a
	{\em $0$-sequence}. If instead each $\gamma_{i+1}$ is the $1$-reduction of $\gamma_i$
	we say it is a {\em $1$-sequence}.
	
\end{definicion}

Observe that each 0-sequence and each 1-sequence are 0-1-sequences
as well
and that a concatenation of 0-1-sequences is again a 0-1-sequence.

\begin{lema}\label{le:ObtencionDeCircuitos}
	If $\gamma$ is a non trivial PNC and $\delta\in X_\gamma$ then there is a 0-1-sequence for $\delta$
	or, equivalently every element of $X_\gamma$ can be obtained from $\gamma$ by a sequence of
	0-reductions and 1-reductions.
\end{lema}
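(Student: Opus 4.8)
The plan is to use the characterization of $X_\gamma$ from Lemma~\ref{le:CaracterizacionFamiliaDeReducciones}, which tells us that every $\delta \in X_\gamma$ has the form $\delta = C_j C_{j+1} \dots C_{j+\ell}$ for some $j, \ell$ with $0 \le j < j+\ell \le m$. So the task reduces to showing that any such ``sub-chain'' can be reached from $\gamma = C_0 C_1 \dots C_m$ by a sequence of $0$-reductions and $1$-reductions. The key observation is that, by the definition of $0$- and $1$-reduction together with Observation~\ref{obs:ReductionsOnChains}: a $0$-reduction deletes the innermost (last) link, sending $C_a C_{a+1} \dots C_b$ to $C_a C_{a+1} \dots C_{b-1}$, and a $1$-reduction deletes the outermost (first) link, sending $C_a C_{a+1} \dots C_b$ to $C_{a+1} \dots C_b$. (One should check the edge cases: a $0$-reduction is only defined when the chain has at least one internal vertex, i.e.\ at least two links, and similarly for $1$-reduction; and when a chain has exactly one internal vertex, its $0$-reduction and $1$-reduction are the two cycles composing it.)

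Given this, the construction is transparent. Starting from $\gamma = C_0 C_1 \dots C_m$, first apply the $0$-reduction $m - (j+\ell)$ times to strip off links $C_m, C_{m-1}, \dots, C_{j+\ell+1}$ from the inner end, arriving at $C_0 C_1 \dots C_{j+\ell}$; this is a valid $0$-sequence since at each stage (as long as $j+\ell < m$, and more generally as long as the current chain still has two or more links, which holds because we stop at index $j+\ell \ge j \ge 0$, leaving at least... ) the chain still has an innermost internal vertex. Then apply the $1$-reduction $j$ times to strip off links $C_0, C_1, \dots, C_{j-1}$ from the outer end, arriving at $C_j C_{j+1} \dots C_{j+\ell}$, which is exactly $\delta$. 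The concatenation of a $0$-sequence and a $1$-sequence is a $0$-$1$-sequence (as noted in the remark after the definition), so this exhibits the desired sequence for $\delta$.

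**The main obstacle**, and the only genuinely delicate point, is the bookkeeping around degenerate cases, namely when the intermediate or final chain is a single cycle (a simple cycle is a trivial PNC with no internal vertices, hence admits neither a $0$-reduction nor a $1$-reduction). If $\delta$ itself is a single link $C_j$ (that is, $\ell = 0$), then after the $0$-reductions we reach $C_0 C_1 \dots C_j$, and we must be careful that the subsequent $1$-reductions terminate exactly at $C_j$: the last $1$-reduction is applied to the two-link chain $C_{j-1} C_j$ (if $j \ge 1$), sending it to the cycle $C_j$, which is legitimate since that two-link chain has exactly one internal vertex. The case $j = 0$ and $\ell = 0$ forces $m = 0$, so $\gamma$ is itself a simple cycle, the trivial PNC excluded by hypothesis; and if $\delta = \gamma$ the empty $0$-$1$-sequence works. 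Handling these boundary situations cleanly — perhaps by separating out the case $\delta = \gamma$ and otherwise observing that at every step of the construction the current chain genuinely has the link we intend to remove — is where the proof needs care, but it is entirely routine once the two reduction operations are correctly identified with ``remove innermost link'' and ``remove outermost link'' via Observation~\ref{obs:ReductionsOnChains}.
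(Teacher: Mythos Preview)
Your approach is correct and genuinely different from the paper's. The paper does \emph{not} invoke Lemma~\ref{le:CaracterizacionFamiliaDeReducciones}; instead it argues directly from the definition $\delta\in X_\gamma\iff\gamma\stackrel{*}{\rightarrow}_C\delta$, shows that a \emph{single} step $\gamma\rightarrow_C\delta$ (an internal or external reduction at some $v_{k_i}$) can be simulated by a pure $0$-sequence or a pure $1$-sequence (each proved by a small induction on $i$), and then concatenates. Your route is shorter precisely because Lemma~\ref{le:CaracterizacionFamiliaDeReducciones} has already done the structural work: once you know $\delta=C_jC_{j+1}\dots C_{j+\ell}$, the sequence ``$m-(j+\ell)$ zero-reductions followed by $j$ one-reductions'' is immediate from Observation~\ref{obs:ReductionsOnChains}. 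The paper's version, by contrast, is self-contained and does not rely on the chain characterization.

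One small slip in your edge-case discussion: the claim that ``$j=0$ and $\ell=0$ forces $m=0$'' is false. For any $m\ge 1$ the single cycle $C_0$ lies in $X_\gamma$, reached by $m$ successive $0$-reductions; nothing forces $\gamma$ to be trivial. Your main construction already handles this case correctly (do $m$ zero-reductions and no one-reductions), so simply delete that sentence. You should also note that Lemma~\ref{le:CaracterizacionFamiliaDeReducciones} as stated requires $j<j+\ell$, hence $\ell\ge1$, so single links $C_j$ are not literally covered by it; either observe that the lemma's proof goes through verbatim with $j\le j+\ell$, or handle the single-cycle case by the explicit sequence you describe.
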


\begin{proof}
	Let $\gamma$ be a PNC with internal sequence $k_1,k_2,\dots,k_m,k'_m,\dots, k'_1$.
	We now by definition that for every PNC $\delta$:
	\[
	\delta\in X_\gamma\iff \gamma\stackrel{*}{\rightarrow}_C\delta
	\]
	so it suffices to check that if $\gamma\rightarrow_C\delta$ then
	there is a 0-1-sequence for $\delta$.
	
	If $\gamma\rightarrow_C\delta$ then $\delta$ is either the internal
	or the external reduction of $\gamma$ at some internal vertex $v_{k_i}$.
	
	Suppose that $\delta$ is the internal reduction of $\gamma$ at $v_{k_i}$.
	In this case:
	\[
	\delta=C_0C_1\dots C_{i-1}
	\]
	We take $j=m-i$ thus $k_i=k_{m-j}$.
	We prove by induction on $0\leq j<m$
	that there is a 0-sequence $\gamma_0,\gamma_1,\dots \gamma_{j+1}$ for $\delta$.
	
	If $j=0$ then $\delta$ is the internal reduction of $\gamma$ at $v_{k_m}$
	then by definition it is the $0$-reduction of $\gamma$.
	Then we have the $0$-sequence $\gamma_0,\gamma_1$ with $\gamma_0=\gamma$ and $\gamma_1=\delta$.
	
	Now suppose $\delta$ is the internal reduction of $\gamma$ at $v_{k_{m-(\ell+1)}}$ with
	$\ell+1<m$ i.e.
	\[
	\delta=C_0C_1\dots C_{(m-\ell)-2}
	\]
	Notice that $\delta$ is the $0$-reduction of
	\[
	\delta'=C_0C_1\dots C_{(m-\ell)-1}
	\]
	which in its turn is the internal reduction of $\gamma$ at $v_{k_{m-\ell}}$.
	By inductive hypothesis there is a 0-sequence $\gamma_0,\dots,\gamma_{\ell+1}$
	for $\delta'$.
	This implies the existence of a 0-sequence $\gamma_0,\dots,\gamma_{\ell+2}$ for $\delta$.
	
	As every 0-sequence is a 0-1-sequence we have a 0-1-sequence for $\delta$.
	
	On the other hand if $\delta$ is the external reduction of $\gamma$ at $v_{k_i}$:
	\[
	\delta=C_iC_{i+1}\dots C_m
	\]
	We prove the existence of a $1$-sequence $\gamma_0,\dots,\gamma_i$
	for $\delta$ by induction on $1\leq i\leq m$.
	
	Our base case is $i=1$. Then
	\[
	\delta=C_1C_{2}\dots C_m
	\]
	is the $1$-reduction of $\gamma$.
	The 1-sequence for $\delta$ is $\gamma_0,\gamma_1$ with $\gamma_0=\gamma$ and $\gamma_1=\delta$.
	
	Now suppose $i=\ell+1$ with $\ell<m$
	Then the external reduction of $\gamma$ at $v_{k_{\ell+1}}$ is
	\[
	\delta=C_{\ell+1}C_{\ell+2}\dots C_m
	\]
	which is the $1$-reduction of
	\[
	\delta'=C_\ell C_{\ell+1}C_{\ell+2}\dots C_m
	\]
	By inductive hypothesis there is a 1-sequence
	$\gamma_0,\gamma_1,\dots,\gamma_\ell$ for $\delta'$
	hence there is a $1$-sequence $\gamma_0,\gamma_1,\dots,\gamma_{\ell+1}$ for $\delta$.
	Since every 1-sequence is a 0-1-sequence, there is a 0-1-sequence for $\delta$.
	
	We just proved that $\gamma\rightarrow_C\delta$ implies there is a
	0-1-sequence $\gamma_0,\dots,\gamma_p$ for $\delta$.
	Since the concatenation of 0-1-sequences yields a 0-1-sequence,
	we conclude there is a 0-1-sequence for every
	$\delta$ such that $\gamma\stackrel{*}{\rightarrow}_C\delta$
	i.e. there is a 0-1-sequence for each $\delta\in X_\gamma$.
	By definition this means that each $\delta\in X_\gamma$ is
	obtained from $\gamma$ by a sequence of 0-reductions and 1-reductions.

\end{proof}

\begin{lema}\label{teo:TeoremaIndependencia}
	Let $\gamma=v_0v_1\dots v_n$ be a PNC with $m\geq 1$ internal vertices and $\delta\in X_{\gamma}$, $\delta\neq \gamma$.
	Let $\gamma,\gamma_1,\ldots, \gamma_p$ be a 0-1-sequence for $\delta$.
	
	If $p=p_0+p_1$ with $p_0$ the number of $0$-reductions
	and $p_1$ the number of $1$-reductions in the 0-1-sequence, then
	\begin{equation}\label{eq:TeoremaIndependencia}
	{\delta=\gamma_p=C_{p_1}C_{p_1+1}\dots C_{m-p_0}}
	\end{equation}
\end{lema}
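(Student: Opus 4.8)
The plan is to argue by induction on the length $p$ of the given $0$-$1$-sequence, tracking how the two endpoints of the underlying sub-chain evolve. The two structural facts I would rely on are: by Theorem~\ref{teo:ReductionsPreservePncProperty} (and Theorem~\ref{teo:APncIsAChain}) every $\gamma_i$ is again a PNC, hence a chain of cycles; and by Lemma~\ref{le:CaracterizacionFamiliaDeReducciones} each $\gamma_i$ is in fact a contiguous sub-chain $C_{a_i}C_{a_i+1}\dots C_{b_i}$ of $\gamma$ for suitable $0\le a_i\le b_i\le m$. With this language, the target \eqref{eq:TeoremaIndependencia} is exactly the assertion that, after a $0$-$1$-sequence with $p_0$ many $0$-reductions and $p_1$ many $1$-reductions, one has $a_p=p_1$ and $b_p=m-p_0$. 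For the base case $p=1$ I would just unwind the definitions: if $\gamma_1$ is the $0$-reduction of $\gamma=C_0\dots C_m$ then $\gamma_1=C_0\dots C_{m-1}$, which matches $p_0=1$, $p_1=0$; and if $\gamma_1$ is the $1$-reduction then $\gamma_1=C_1\dots C_m$, which matches $p_0=0$, $p_1=1$.

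For the inductive step I would split the length-$(p+1)$ sequence $\gamma,\gamma_1,\dots,\gamma_p,\gamma_{p+1}$ as a length-$p$ prefix ending at $\gamma_p$ followed by one more reduction. Applying the inductive hypothesis to the prefix (a $0$-$1$-sequence for $\gamma_p$ with, say, $p_0'$ many $0$-reductions and $p_1'$ many $1$-reductions) gives $\gamma_p=C_{p_1'}C_{p_1'+1}\dots C_{m-p_0'}$. Regard this as the chain $D_0D_1\dots D_k$ with $D_i=C_{p_1'+i}$ and $k=(m-p_0')-p_1'$; since a further reduction $\gamma_{p+1}$ exists, $\gamma_p$ is non-trivial, so it has at least one internal vertex and $k\ge 1$, its outermost internal vertex is the one joining $D_0$ with $D_1$ and its innermost internal vertex is the one joining $D_{k-1}$ with $D_k$. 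By Observation~\ref{obs:ReductionsOnChains} applied to $\gamma_p$, the $0$-reduction of $\gamma_p$ (the internal reduction at its innermost vertex) is $D_0\dots D_{k-1}=C_{p_1'}\dots C_{m-p_0'-1}$, and the $1$-reduction of $\gamma_p$ (the external reduction at its outermost vertex) is $D_1\dots D_k=C_{p_1'+1}\dots C_{m-p_0'}$. In the first case the total counts for the whole sequence are $p_0=p_0'+1$, $p_1=p_1'$ and $\gamma_{p+1}=C_{p_1}\dots C_{m-p_0}$; in the second case $p_0=p_0'$, $p_1=p_1'+1$ and again $\gamma_{p+1}=C_{p_1}\dots C_{m-p_0}$. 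This closes the induction.

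I expect the only real subtlety to be the orientation bookkeeping used in the inductive step: I must be sure that the outermost (resp. innermost) internal vertex of a sub-chain $C_aC_{a+1}\dots C_b$ is the vertex joining $C_a$ with $C_{a+1}$ (resp. the vertex joining $C_{b-1}$ with $C_b$), i.e. that a reduction of $\gamma$ inherits not merely the link set but also the link ordering of the chain decomposition of $\gamma$. This follows from the explicit description of the internal vertices of a chain produced in the proof of Lemma~\ref{le:ChainsOfCyclesArePNCs}, together with the uniqueness clause of Lemma~\ref{le:PNCsAreChainsOfCycles}; alternatively, I would fold the point into the induction hypothesis itself by additionally recording, at each stage, which link of $\gamma_p$ is currently outermost and which is innermost. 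Once this is settled, everything else is the routine index accounting carried out above.
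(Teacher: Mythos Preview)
Your proposal is correct and follows essentially the same approach as the paper: induction on the length $p$ of the $0$-$1$-sequence, with the same base case and the same inductive step that peels off the last reduction and applies the hypothesis to the prefix. The only difference is that you are more explicit than the paper about the orientation bookkeeping (why the innermost and outermost internal vertices of the sub-chain $C_{p_1'}\dots C_{m-p_0'}$ sit where they should), invoking Observation~\ref{obs:ReductionsOnChains} and the chain lemmas; the paper simply asserts the effect of the $0$- and $1$-reduction on the sub-chain without further comment.
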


\begin{proof}
	We will prove this result by induction on $p$.
	
	Since $\gamma\neq\delta$ our base case is $p=1$ thus
	we have two possibilities: either $p_0=1$ and $p_1=0$ or $p_0=0$ and $p_1=1$.
	If $p_0=1,p_2=0$ then $\delta$ is the 0-reduction of $\gamma$ i.e.
	\[
	\delta=C_{0}C_{1}\dots C_{m-1}
	\]
	and if $p_0=0, p_2=1$ then $\delta$ is the 1-reduction of $\gamma$:
	\[
	\delta=C_{1}C_{2}\dots C_{m}
	\]
	in both cases we have the stated result.
	
	Now let $p>1$. Then we have a 0-1-sequence
	$\gamma_0,\gamma_1,\dots, \gamma_{p-1},\gamma_p$ such that $\gamma_0=\gamma$ and $\gamma_p=\delta$.
	Suppose $p-1=p'_0+p'_1$ where $p'_0,p'_1$ are respectively the number of 0-reductions and 1-reductions in the sequence
	$\gamma_0,\gamma_1,\dots, \gamma_{p-1}$.
	By inductive hypothesis:
	\[
	\gamma_{p-1}=C_{p'_1}C_{p_2+1}\dots C_{m-p'_0}
	\]
	Since $p=p_0+p_1$ we must have that either $p_0=p_0'+1$ or $p_1=p_1'+1$.
	If $p_0=p_0'+1$ then $\gamma_p$ is the 0-reduction of $\gamma_{p-1}$ and $p_1=p_1'$:
	\begin{align*}
	\gamma_{p}=\delta	&=C_{p'_1}C_{p'_1+1}\dots C_{m-p'_0-1}\\
						&=C_{p_1}C_{p_1+1}\dots C_{m-(p'_0+1)}\\
						&=C_{p_1}C_{p_1+1}\dots C_{m-p_0}
	\end{align*}
	If, on the other hand, $p_1=p_1'+1$ then $\gamma_p$ is the 1-reduction of $\gamma_{p-1}$ and $p_0=p_0'$:
	\begin{align*}
	\gamma_{p}=\delta	&=C_{p_1'+1}C_{p'_1+2}\dots C_{m-p'_0}\\
	&=C_{p_1}C_{p_1+1}\dots C_{m-p_0}
	\end{align*}
	Hence in either case we conclude the desired result.

\end{proof}

Albeit 0-1-sequences for a given $\delta\in X_\gamma$ do not need to be unique,
they all have the same lenght.	
\begin{lema} \label{le:LongitudUnica}
	Let $\gamma$ be a non trivial PNC and $\delta\in X_\gamma$
	with $\delta\neq \gamma$.
	
	If $\gamma_0,\dots \gamma_p$ and $\delta_0,\ldots,\delta_q$
	are 0-1-sequences for $\delta$
	then $p=q$.
\end{lema}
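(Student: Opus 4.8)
The plan is to lean entirely on Lemma~\ref{teo:TeoremaIndependencia}, which already determines the chain decomposition of $\delta$ from the number of $0$- and $1$-reductions used to reach it. Write $p=p_0+p_1$, where $p_0$ is the number of $0$-reductions and $p_1$ the number of $1$-reductions occurring in $\gamma_0,\dots,\gamma_p$, and likewise $q=q_0+q_1$ for $\delta_0,\dots,\delta_q$. Since $\delta\neq\gamma$, both sequences contain at least one step, so $p,q\geq 1$ and Lemma~\ref{teo:TeoremaIndependencia} applies to each of them, yielding
\[
\delta=C_{p_1}C_{p_1+1}\dots C_{m-p_0}=C_{q_1}C_{q_1+1}\dots C_{m-q_0}.
\]

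The key step is to extract from these two presentations an intrinsic invariant of $\delta$. The chain $C_{p_1}C_{p_1+1}\dots C_{m-p_0}$ has $(m-p_0)-p_1+1=m-p+1$ links, so by Lemma~\ref{le:ChainsOfCyclesArePNCs} the PNC $\delta$ has exactly $m-p$ internal vertices; the same reasoning applied to the second presentation gives that $\delta$ has exactly $m-q$ internal vertices. But the number of internal vertices of the fixed circuit $\delta$ is the intrinsic quantity $|\varphi(I_\delta)|$ and does not depend on the chosen $0$-$1$-sequence. Hence $m-p=m-q$, i.e. $p=q$.

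I do not expect a genuine obstacle here; the substantive work was already carried out in Lemma~\ref{teo:TeoremaIndependencia}. The only things to check are bookkeeping at the endpoints: that $p\leq m$ (so that $C_{p_1}\dots C_{m-p_0}$ is a legitimate chain, and symmetrically $p_1\leq m-p_0$), which is automatic because $\delta$ has a nonnegative number of internal vertices, and that $\delta$ is itself a PNC so that Lemma~\ref{le:ChainsOfCyclesArePNCs} and the notion of internal vertex apply, which follows from Theorem~\ref{teo:ReductionsPreservePncProperty}. Morally, the lemma records that although a $0$-$1$-sequence for $\delta$ need not be unique, its ``shape'' --- how many links were stripped off the outer end and how many off the inner end --- is forced by the position of $\delta$ as a subchain of $\gamma$, and in particular its total length is.
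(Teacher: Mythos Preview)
Your proof is correct and follows essentially the same route as the paper: both arguments reduce immediately to Lemma~\ref{teo:TeoremaIndependencia}, which pins down $\delta$ as the subchain $C_{p_1}\dots C_{m-p_0}$. The only cosmetic difference is the concluding step---the paper argues the contrapositive (if $p\neq q$ then some endpoint index differs, so the two subchains cannot coincide), whereas you read off the intrinsic link count $m-p+1$ and equate it with $m-q+1$; your version is arguably cleaner since it avoids appealing, even implicitly, to uniqueness of the chain decomposition.
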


\begin{proof}
	We will prove that if there are 0-1-sequences $\gamma_0,\gamma_1,\dots, \gamma_p$ and $\delta_0,\delta_1,\ldots, \delta_q$
	in $X_\gamma$ with $p\neq q$ then $\gamma_p\neq \delta_q$.

	Let $p_0$ and $p_1$ be respectively the number of 0-reductions
	and 1-reductions  in the sequence $\gamma_0,\dots, \gamma_p$. On the other hand let
	$q_0,q_1$ be the number of 0-reductions
	and 1-reductions respectively in the sequence $\delta_0,\dots, \delta_q$.
	If $p<q$ then we have either $p_1<q_1$ or $p_1<q_2$. In both cases
	the conclusion of Lemma \ref{teo:TeoremaIndependencia} guarantees that $\gamma_p\neq\delta_q$.
	The case for $q<p$ is analogous.

\end{proof}

\begin{definicion}
	Let $\gamma$ be a PNC, $\delta\in X_\gamma$ and $\gamma_0,\gamma_1,\dots, \gamma_p$
	a 0-1-sequence for $\delta$.
	
	The {\em characteristic sequence} of $\gamma_0,\gamma_1,\dots, \gamma_p$
	is the binary sequence $s_1,\dots, s_p$ such that $s_i=0$ if $\gamma_i$
	is the 0-reduction of $\gamma_{i-1}$ and $s_i=1$ if $\gamma_i$
	is the 1-reduction of $\gamma_{i-1}$.
\end{definicion}

\begin{teorema}\label{teo:Isomorfismo}
	If $\gamma$ is a perfectly nested circuit with $m$ internal vertices,
	then the partial orders $(X_\gamma,\leq_\gamma)$ and $(S_m,\leq_m)$
	are isomorphic.
\end{teorema}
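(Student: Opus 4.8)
The plan is to build an explicit order isomorphism $\Phi\colon X_\gamma\to S_m$ out of characteristic sequences. If $m=0$ then $\gamma$ is a simple cycle, $X_\gamma=\{\gamma\}$ and $S_0=\{[\emptyset]\}$, so both are the one-point poset; hence one may assume $\gamma$ is a non trivial PNC and write $\gamma=C_0C_1\dots C_m$. For $\delta\in X_\gamma$ pick, using Lemma~\ref{le:ObtencionDeCircuitos}, a $0$-$1$-sequence for $\delta$, let $s$ be its characteristic sequence, and set $\Phi(\delta)=[s]$. This is well defined: by Lemma~\ref{le:LongitudUnica} all $0$-$1$-sequences for $\delta$ have a common length $p$, and if such a sequence has $p_1$ $1$-reductions and $p_0$ $0$-reductions then by Lemma~\ref{teo:TeoremaIndependencia} $\delta=C_{p_1}C_{p_1+1}\dots C_{m-p_0}$, so $p_1$ and $p_0$ depend only on $\delta$; thus the characteristic sequences of all $0$-$1$-sequences for $\delta$ have the same length $p=p_0+p_1$ and the same number $p_1$ of $1$'s, i.e.\ they are $\sim_m$-equivalent. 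Moreover $C_{p_1}\dots C_{m-p_0}$ being a genuine sub-chain forces $p_1\leq m-p_0$, hence $\ell(s)=p_0+p_1\leq m$ and $[s]\in S_m$. (For $\delta=\gamma$ the only $0$-$1$-sequence has $p=0$ and $\Phi(\gamma)=[\emptyset]$, consistently with $\gamma=C_0\dots C_m$.)

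Next I would check that $\Phi$ is a bijection. Injectivity: if $\Phi(\xi)=\Phi(\eta)$ their characteristic sequences have the same length and the same number of $1$'s, so the pairs $(p_1,p_0)$ agree and, by Lemma~\ref{teo:TeoremaIndependencia}, both $\xi$ and $\eta$ equal $C_{p_1}\dots C_{m-p_0}$, whence $\xi=\eta$. Surjectivity: given $[s]\in S_m$ put $a=|s|_1$ and $b=|s|_0$, so $a+b=\ell(s)\leq m$; performing $b$ consecutive $0$-reductions and then $a$ consecutive $1$-reductions starting from $\gamma$ is a valid $0$-$1$-sequence (each intermediate circuit still has enough internal vertices, since $b\leq m$ and $a\leq m-b$), it reaches $\delta:=C_aC_{a+1}\dots C_{m-b}\in X_\gamma$, and its characteristic sequence is $\underbrace{0\cdots 0}_{b}\underbrace{1\cdots 1}_{a}$, a member of $[s]$; hence $\Phi(\delta)=[s]$.

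It then remains to prove that $\xi\leq_\gamma\eta\iff\Phi(\xi)\leq_m\Phi(\eta)$. Write $\Phi(\xi)=[s]$ and $\Phi(\eta)=[t]$; by Lemma~\ref{teo:TeoremaIndependencia} we have $\xi=C_{|s|_1}\dots C_{m-|s|_0}$ and $\eta=C_{|t|_1}\dots C_{m-|t|_0}$ (read as $\gamma=C_0\dots C_m$ in the trivial cases $\xi=\gamma$, resp.\ $\eta=\gamma$). By Theorem~\ref{teo:ReductionsPreservePncProperty} the circuit $\eta$ is itself a PNC and $C_{|t|_1}\dots C_{m-|t|_0}$ is its decomposition as a chain of cycles, so applying Lemma~\ref{le:CaracterizacionFamiliaDeReducciones} to $\eta$ (after re-indexing its links) gives that $\xi\in X_\eta$ — equivalently $\eta\stackrel{*}{\rightarrow}_C\xi$, that is $\xi\leq_\gamma\eta$ — if and only if $\xi$ is a contiguous sub-chain of $\eta$, i.e.\ $|t|_1\leq|s|_1$ and $m-|s|_0\leq m-|t|_0$, i.e.\ $|s|_1\geq|t|_1$ and $|s|_0\geq|t|_0$. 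On the other hand $[s]\leq_m[t]$ is equivalent to the same two inequalities: if $s'\in[s]$ extends $t'\in[t]$ then $t'$ is a prefix of $s'$, so $|s|_1=|s'|_1\geq|t'|_1=|t|_1$ and likewise for $0$'s; conversely, when the inequalities hold, $t'=0^{|t|_0}1^{|t|_1}\in[t]$ is a prefix of $s'={t'}^{\frown}0^{|s|_0-|t|_0}1^{|s|_1-|t|_1}\in[s]$, and $\ell(s')=\ell(s)\leq m$ so $s'\in 2^{\leq m}$. Hence $\xi\leq_\gamma\eta\iff\Phi(\xi)\leq_m\Phi(\eta)$, and $\Phi$ is the desired order isomorphism.

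Most of the real work is already packaged in Lemmas~\ref{le:ObtencionDeCircuitos}, \ref{teo:TeoremaIndependencia}, \ref{le:LongitudUnica} and the chain characterization, so the remaining argument is mostly bookkeeping; I do not expect any serious obstacle. The one point that needs care is the order part, where two different ``coordinate systems'' must be matched up — the number of $0$'s of a characteristic sequence corresponds to $m$ minus the right endpoint of the sub-chain, while the number of $1$'s corresponds to its left endpoint — and where the chain-of-cycles characterization (Lemma~\ref{le:CaracterizacionFamiliaDeReducciones}) must be applied not to $\gamma$ but to the PNC $\eta$, which is legitimate precisely because of Theorem~\ref{teo:ReductionsPreservePncProperty}.
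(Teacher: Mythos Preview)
Your argument is correct and follows the same strategy as the paper: define the map via characteristic sequences of $0$-$1$-sequences, use Lemmas~\ref{le:ObtencionDeCircuitos}, \ref{teo:TeoremaIndependencia} and~\ref{le:LongitudUnica} for well-definedness and bijectivity, then verify order preservation. The only variation is in that last step: the paper concatenates a $0$-$1$-sequence for $\eta$ with one for $\xi$ in $X_\eta$ and reads off the extension directly, whereas you invoke Lemma~\ref{le:CaracterizacionFamiliaDeReducciones} (applied to the PNC $\eta$) to reduce both $\xi\leq_\gamma\eta$ and $[s]\leq_m[t]$ to the numerical criterion $|s|_0\geq|t|_0$ and $|s|_1\geq|t|_1$ --- this makes the biconditional more transparent than the paper's version.
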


\begin{proof}
	Define a function $f:X_\gamma\longrightarrow S_m$ as follows:
	\begin{equation}
	f(\delta)=
	\begin{cases}
	[\emptyset]	&	\text{if}\quad \delta=\gamma\\
	[s]\quad \text{$s$ the characteristic sequence of a 0-1 sequence for $\delta$}			
	&	\text{if}\quad \delta\neq\gamma		
	\end{cases}
	\end{equation}
	By Lemma \ref{le:ObtencionDeCircuitos} there is a 0-1 sequence for each $\delta\in X_\gamma$.
	By Lemmas \ref{teo:TeoremaIndependencia} and \ref{le:LongitudUnica} this function is well defined
	i.e. it does not depend on the 0-1 sequence we choose for $\delta$.

	If $\delta$ and $\delta'$ are two circuits with $f(\delta)=f(\delta')$
	Lemma \ref{teo:TeoremaIndependencia} says that $\delta=\delta'$ thus $f$ is injective.

	On the other hand consider $[s]\in S_m$ with $\ell(s)=p\leq m$ such that $|s|_0=p_0$ and $|s|_1=p_1$.
	If $\delta\in X_\gamma$ is obtained from $\gamma$
	by $p_0$ $0$-reductions and $p_1$ $1$-reductions,
	then $f(\delta)=[s]$. Hence
	$f$ is onto.

	Therefore $f$ is a bijection between $X_\gamma$ and $S_m$.

	Now we are going to prove that
		\[
		\xi\leq_\gamma \delta \iff f(\xi)\leq_m f(\delta)
		\]
		for every pair of circuits $\xi,\delta\in X_\gamma$.

	Given $\xi$ and $\delta$ in $X_\gamma$,
	$\xi\leq_\gamma \delta$ if and only if $\delta\stackrel{*}{\rightarrow}_C\xi$
	thus $\xi\leq_\gamma \delta$ if and only if $\xi\in X_\delta$.
	Hence $\xi\leq_\gamma \delta$ is equivalent to the existence of a
	0-1-sequence $\gamma_0,\dots, \gamma_p$ for $\delta$ en $X_\gamma$
	and a 0-1-sequence $\delta_0,\dots, \delta_q$ for $\xi$ in $X_\delta$.

	Since $X_\delta\subseteq X_\gamma$ (Lemma \ref{le:XGammaEsTransitivo})
	we have that every term in
	the sequence $\{\delta_i\}$ is also an element of $X_\gamma$.
	Thus saying that $\xi\leq_\gamma \delta$ is equivalent to say that
	there is a 0-1-sequence $\gamma_0,\dots, \gamma_p=\delta_0,\dots, \delta_q$
	for $\xi$ in $X_\gamma$.
	
	It is immediate that if $t$ is the characteristic sequence of the 0-1 sequence ${\gamma_i}$
 	and $s$ is the characteristic sequence of
 $\gamma_0,\dots, \gamma_p=\delta_0,\dots, \delta_q$ then $s$ extends $t$.

 Since $f(\delta)=[s]$ and $f(\xi)=[t]$ we have
 $\xi\leq_\gamma \delta$ if and only if $f(\delta)\leq_mf(\xi)$,
 as we wanted to prove.		
	
\end{proof}

\section{Conclusions and further research}\label{sec:Conclusiones}
In this article, a kind of nested graph is presented along with its properties. It was called perfectly nested circuit after analyzing the characteristics as graph structure. Several concepts, definitions and figures were used to make clear the identification of this type of circuits. As a result we establish an order isomorphism between some sets of perfectly nested circuits and equivalence classes over finite binary sequences. Considering similarities with nested graphs the next step should include the use of perfectly nested circuits in knowledge representation, semantic of natural languages and inference in databases.


\end{document}